\frenchspacing \setlength{\parindent}{0pt}
\theoremstyle{plain}
\newtheorem{theorem}{Theorem}
\newtheorem{lemma}{Lemma}
\newtheorem{corollary}{Corollary}
\newtheorem{proposition}{Proposition}
\newtheorem{definition}{Definition}
\newtheorem{conjecture}{Conjecture}
\theoremstyle{remark}
\newtheorem{remark}{Remark}
\newcommand{\spl}{\mbox{\sf Split}}
\newcommand{\id}{\mbox{\it id}}
\title{Synchronizing non-deterministic finite automata}
\author[1]{Henk Don}
\author[2,3]{Hans Zantema}
\affil[1]{\small Department of Mathematics, Vrije Universiteit Amsterdam, De Boelelaan 1081a, 
	1081 HV Amsterdam, 
	The Netherlands, email: {\tt h.don@vu.nl} }
\affil[2]{Department of Computer Science, TU Eindhoven, P.O.\ Box 513,\\
		5600 MB Eindhoven, The Netherlands, 
		email: {\tt h.zantema@tue.nl}}
\affil[3]{Radboud University Nijmegen, P.O.\ Box 9010, \\ 6500 GL Nijmegen, The Netherlands}
\begin{document}
\maketitle

\begin{abstract}
In this paper, we show that every D3-directing CNFA can be mapped uniquely to a DFA with the same synchronizing word length. This implies that \v{C}ern\'y's conjecture generalizes to CNFAs and that the general upper bound for the length of a shortest D3-directing word is equal to the Pin-Frankl bound for DFAs. As a second consequence, for several classes of CNFAs sharper bounds are established. Finally, our results allow us to detect all critical CNFAs on at most 6 states. It turns out that only very few critical CNFAs exist. 
\end{abstract}

\section{Introduction and preliminaries}

In this paper we study synchronization of non-deterministic finite automata (NFAs). As is the case for deterministic finite automata (DFAs), symbols define functions on the state set $Q$. However, in an NFA symbols are allowed to send a state to a subset of $Q$, rather than to a single state. An NFA is called complete if these subsets are non-empty. This basically says that in every state, every symbol has at least one out-going edge. Formally, a {\em complete non-deterministic finite automaton (CNFA)} $\mathcal{A}$ over a finite alphabet $\Sigma$ consists of a finite set
$Q$ of states and a map $\delta: Q \times \Sigma \to 2^Q\setminus \{\emptyset\}$. We denote the number of states by $|\mathcal{A}|$ or by $|Q|$. 

A DFA is called synchronizing if there exists a word that sends every state to the same fixed state. In 1964 \v{C}ern\'y \cite{C64} conjectured that a synchronizing DFA on $n$ states always admits a synchronizing (or directing, reset) word of length at most $(n-1)^2$. He gave a sequence $C_n$ of DFAs in which the shortest synchronizing word attains this bound. In this paper, we denote the maximal length of a shortest synchronizing word in an $n$-state DFA by $d(n)$. The best known bounds for $d(n)$ are
\begin{equation}
(n-1)^2 \leq d(n) \leq \frac{n^3-n}{6}.
\end{equation} 
For a proof of the upper bound, we refer to \cite{pin}. A DFA on $n$ states is 
{\em critical} if its shortest synchronizing word has length $(n-1)^2$; it is 
{\em super-critical} if its shortest synchronizing word has length $> (n-1)^2$. So \v{C}ern\'y's 
conjecture states that no super-critical DFAs exist. It turns out that there are not too many 
critical DFAs. Investigation of all critical DFAs with less than 7 states but unrestricted alphabet was recently completed \cite{DZ17,BDZ17}. 
DFAs without copies of the same symbol and without the identity are called \emph{basic}. For
$n=3,4,5,6$, only 31 basic critical DFAs exist up to isomorphism. So critical DFAs are very infrequent, 
as the total number of basic DFAs on $n$ states is $2^{n^n-1}$, including isomorphisms. For $n\geq 7$, 
the only known examples are from \v{C}ern\'y's sequence.

For $S\subseteq Q$ and $w\in\Sigma^*$, let $Sw$ be the set of all states where one can end when starting in some state $q\in S$ and reading the symbols in $w$ consecutively. Write $qw$ for $\left\{q\right\}w$. Formal definitions will be given in Section \ref{sec:preliminaries}. A DFA is synchronizing if there exists $w\in \Sigma^*$ and $q_s\in Q$ such that $qw = q_s$ for all $q\in Q$. There are several ways to generalize this concept of synchronization to CNFAs, see \cite{imreh}. In this paper, we study CNFAs known in the literature as D3-directing. This notion is defined as follows:

\begin{definition} A CNFA $(Q,\Sigma,\delta)$ is called D3-directing if there exists a word $w\in\Sigma^*$ and a state $q_s$ such that $q_s\in qw$ for all $q\in Q$. The word $w$ is called a D3-directing word. 
\end{definition}

An example of a D3-directing CNFA is depicted below. There exist several D3-directing words of length four, but no shorter ones. An example is $w = baba$, which gives $1w = \left\{1,3\right\}$, $2w = \left\{1,2\right\}$ and $3w = \left\{1,2,3\right\}$. The synchronizing state for this word is 1. Another D3-directing word is $v = aabb$, for which $1v =3v = \left\{1,2,3\right\}$ and $2v = 2$. Here the synchronizing state is 2.   

\begin{center}
	\begin{tikzpicture}[-latex',node distance =2 cm and 2cm ,on grid ,
	semithick , state/.style ={ circle ,top color =white , bottom
		color = white!20 , draw, black , text=black , minimum width =.5
		cm}] 
	
	\node[state] (1) {1}; 
	\node[state] (2) [below left =of 1] {2};
	\node[state] (3) [below right =of 1] {3}; 
	\path (1) edge [bend right = 15] node[above left] {b} (2); 
	\path (1) edge [bend right = 15] node[below left] {a} (3);
	\path (2) edge [bend right = 15] node[below right] {b} (1); 
	\path (3) edge [bend right = 15] node[above right] {a} (1);
	\path (3) edge node[below] {b} (2); 
	\path (1) edge [loop above,looseness=8, in = 60, out = 120] node[above] {a} (1); 
	\path (2) edge [loop left,looseness=8, in = 180, out = 240] node[left] {a} (2);
	\path (3) edge [loop right,looseness=8, in = 300, out = 0] node[right] {b} (3);
	\end{tikzpicture}
\end{center}

A word is D3-directing if starting in any state $q$, there exists a path labelled by $w$ that ends in $q_s$. For DFAs this notion coincides with a synchronizing word. If a CNFA $\mathcal{A}$ is D3-directing, a natural question is to find the length of a shortest D3-directing word. We denote this length by $d_3(\mathcal{A})$. Furthermore, we denote by $cd_3(n)$ the worst case, i.e. we let CDir(3) be the collection of all D3-directing CNFAs and define
\begin{equation}
cd_3(n)  = \max\left\{d_3(\mathcal{A}):\mathcal{A}\in {\rm CDir(3)},|\mathcal{A}|=n\right\}.
\end{equation}

In  \cite{imreh} it is shown that for all $n\geq 1$, 
\begin{equation}
(n-1)^2 \leq cd_3(n) \leq \frac12 n(n-1)(n-2)+1.
\end{equation}
The lower bound follows from the fact that every DFA is also a CNFA and that for DFAs the notions of synchronization and D3-directability coincide. As far as we are aware, these bounds are still the sharpest known for CNFAs, although sharper results were recently obtained for the essentially equivalent problem of bounding lengths of column-primitive products of matrices \cite{CHJ15}. Analogous to DFAs, a D3-directing CNFA is called critical if its shortest D3-directing word has length $(n-1)^2$, and super-critical if it has length $>(n-1)^2$. 
In the current paper, we will prove that in fact $cd_3(n) = d(n)$, which immediately sharpens the upper bound for $cd_3(n)$ to $(n^3-n)/6$. Our result also implies that \v{C}ern\'y's conjecture is equivalent to the following:
\begin{conjecture}\label{conjecture1}
Every D3-directing CNFA with $n$ states admits a D3-directing word of length at most $(n-1)^2$.
\end{conjecture}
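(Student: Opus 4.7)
The plan is to prove the $(n-1)^2$ bound by reducing Conjecture \ref{conjecture1} to \v{C}ern\'y's conjecture through a length-preserving map $\Phi$ that sends each D3-directing CNFA $\mathcal{A}$ on $n$ states to a synchronizing DFA $\Phi(\mathcal{A})$ on the same state set with $d(\Phi(\mathcal{A})) = d_3(\mathcal{A})$. If such a $\Phi$ exists, then \v{C}ern\'y's conjectured bound applied to the $n$-state DFA $\Phi(\mathcal{A})$ gives $d_3(\mathcal{A}) = d(\Phi(\mathcal{A})) \leq (n-1)^2$ immediately, proving the conjecture.

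The candidates for $\Phi(\mathcal{A})$ are the deterministic restrictions of $\mathcal{A}$: DFAs $R$ obtained by selecting, for every $(q,a)$ with $|\delta(q,a)|>1$, a single successor $\delta_R(q,a) \in \delta(q,a)$. For any restriction $R$, every synchronizing word of $R$ is automatically D3-directing for $\mathcal{A}$, so $d_3(\mathcal{A}) \leq d(R)$; the technical core is producing a restriction $R^{\ast}$ that achieves equality. To build $R^{\ast}$, fix an optimal D3-directing word $w = a_1 \cdots a_\ell$ with synchronizing state $q_s$, and for each $q\in Q$ choose a witness path $q = q_0 \to q_1 \to \cdots \to q_\ell = q_s$ labelled by $w$ in $\mathcal{A}$. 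Define $\delta_{R^{\ast}}(q_i,a_{i+1}) = q_{i+1}$ along these paths, extending arbitrarily outside. Then $w$ synchronizes $R^{\ast}$, so $d(R^{\ast}) \leq \ell = d_3(\mathcal{A})$, and setting $\Phi(\mathcal{A})=R^{\ast}$ completes the reduction.

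The hard part is ensuring the consistency of the witness selection: two distinct paths may reach the same state $p$ at the same time index $i$ but then, on symbol $a_{i+1}$, continue to different successors, forcing $\delta_{R^{\ast}}(p,a_{i+1})$ to take two incompatible values. To resolve this I would define the paths by backward induction from step $\ell$, at each step $i$ committing, for every state $p$ visited so far at position $i$, to a single successor in $\delta(p,a_{i+1})$ that lies in the already-chosen ``good'' set at position $i+1$. Any subsequent path that visits $p$ at position $i$ must then reuse that committed transition and thereafter follow the already-constructed tail to $q_s$. This merging argument is exactly the combinatorial kernel underlying the paper's identity $cd_3(n)=d(n)$. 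With the reduction secured, the $(n-1)^2$ bound is inherited directly from \v{C}ern\'y's conjecture for DFAs; the main obstacle to an unconditional proof is therefore \v{C}ern\'y's conjecture itself, without which only the cubic Pin--Frankl bound $(n^3-n)/6$ transfers through $\Phi$.
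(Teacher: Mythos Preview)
Your reduction is not the paper's $\spl$ transformation, and in fact it does not work. A \emph{deterministic restriction} of $\mathcal{A}=(Q,\Sigma)$ keeps the same alphabet $\Sigma$ and fixes one successor $\delta_R(q,a)\in\delta(q,a)$ for each pair $(q,a)$. The paper's $\spl$ instead \emph{enlarges} the alphabet: every non-deterministic symbol $a$ is replaced by all deterministic $b\subseteq a$, so that a synchronizing word in $\spl(\mathcal{A})$ may use different deterministic ``copies'' of $a$ at different positions. This extra freedom is exactly what your construction lacks, and it is essential.

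Concretely, take $Q=\{1,2,3\}$, $\Sigma=\{a\}$ with $1a=\{2\}$, $2a=\{3\}$, $3a=\{1,2\}$. One checks that $aaa$ is D3-directing (the common state is $2$), so $d_3(\mathcal{A})=3$. The only two restrictions are $a_1\colon 1\mapsto 2,\ 2\mapsto 3,\ 3\mapsto 1$ (a $3$-cycle, not synchronizing) and $a_2\colon 1\mapsto 2,\ 2\mapsto 3,\ 3\mapsto 2$ (contains the $2$-cycle $\{2,3\}$, not synchronizing). Hence no restriction $R$ of $\mathcal{A}$ is synchronizing at all, let alone with $d(R)=d_3(\mathcal{A})$; your $\Phi$ cannot exist. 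In $\spl(\mathcal{A})$, by contrast, the word $a_2a_1a_2$ synchronizes in length $3$, using both copies of $a$.

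The gap in your consistency argument is that you only treat collisions ``same state $p$ at the same time index $i$''. The fatal conflict is ``same pair $(p,a)$ at different time indices'': if the letter at positions $i$ and $j$ is the same $a\in\Sigma$ and some path visits $p$ at step $i{-}1$ needing successor $p'$ while another visits $p$ at step $j{-}1$ needing successor $p''\neq p'$, then $\delta_{R^\ast}(p,a)$ is forced to two different values. Your backward induction over time indices cannot repair this, because the value of $\delta_{R^\ast}(p,a)$ is a single global choice, not a per-position one. The paper sidesteps the issue precisely by letting the DFA pick a fresh deterministic refinement of $a$ at each position; that is the content of Lemma~\ref{lemma2}, and it is why $\spl$ (not restriction) is the right length-preserving map. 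With $\spl$ in hand, the equivalence with \v{C}ern\'y's conjecture follows as you say; without it, your argument does not establish the reduction.
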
 
The main ingredient to prove that $cd_3(n) = d(n)$ is a splitting transformation $\spl$ that maps a CNFA to a DFA. Every D3-directing CNFA $\mathcal{A}$ is transformed into a synchronizing DFA $\spl\mathcal{(A)}$, preserving the shortest D3-directing word length. For several classes of DFAs, the \v{C}ern\'y conjecture has been established, or sharper bounds than the general bounds have been proven, see for example \cite{ananichev, beal, don, dubuc, E90, kari, volkov2}. If $\spl\mathcal{(A)}$ satisfies the properties for one of these classes, then the sharper results for $\spl\mathcal{(A)}$ also apply to the CNFA $\mathcal{A}$. This observation gives rise to generalize several properties of DFAs into notions for CNFAs and to check if these generalized properties are preserved under $\spl$. In this way, we derive sharper upper bounds on the maximal D3-directing word length for several classes of CNFAs.  

Finally, in this paper we search for examples of critical D3-directing CNFAs. Note that the number
of CNFAs without identical symbols on $n$ states is huge, namely $2^{(2^n-1)^n}$ when we include
isomorphisms. Therefore an exhaustive search is problematic, even for small $n$. However, since we
know that every critical CNFA can be transformed into a critical DFA, we can try to find critical
examples by reversing the transformation $\spl$. Since all critical DFAs on $\leq 6$ states are known, 
this approach allows us to identify all critical CNFAs on $\leq 6$ states. Applying this strategy to 
the other known critical DFAs, the only critical CNFAs we find are small modifications of 
\v{C}ern\'y's sequence. 

\subsection{Preliminaries}\label{sec:preliminaries}

In this section we present our formal definitions and notation which will be slightly different from the
traditional notation, as we avoid the use of the transition function.  A symbol (or letter, label) $a$ in a
CNFA will be a function $a:Q\rightarrow 2^Q\setminus\{\emptyset\}$, and we denote $a(q)$ by $qa$. A symbol
extends (denoting the extension by $a$ as well) to a function $a:2^Q\rightarrow 2^Q\setminus\{\emptyset\}$ by
$Sa = \bigcup_{q\in S}qa$. The set of all letters on $Q$ that can be obtained in this way is denoted $T(Q)$,
which is a strict subset of the set of all functions from $2^Q$ to $2^Q\setminus\{\emptyset\}$.  The set of all possible symbols in a DFA on its turn is a subset of $T(Q)$:
$$
T^d(Q) = \left\{a\in T(Q):\forall q\in Q\ |qa|=1\right\}.
$$
A CNFA $\mathcal{A}$ is defined to be a pair $(Q,\Sigma)$, where $\Sigma\subseteq T(Q)$. Similarly a DFA is a pair $(Q,\Sigma)$ with $\Sigma\subseteq T^d(Q)$. Note that these definitions do not allow for two symbols that act exactly in the same way: if $a$ is a possible symbol, then either $a\in\Sigma$ or $a\not\in\Sigma$.

A symbol $a\in T(Q)$ induces a directed graph $G_a$ with vertex set $Q$ and can therefore be viewed as a subset of $Q\times Q$: 
$$
a = \left\{(q,p):q\in Q, p\in qa\right\},
$$ 
so $a$ is identified with the set of all edges in $G_a$.    
This point of view is used to define set relations and operations like inclusion and union on $T(Q)$. For example, if $a,b\in T(Q)$, then
$$
a\cup b = \left\{(q,p):q\in Q, p\in qa\cup qb\right\}.
$$

Suppose $\mathcal{A} = (Q,\Sigma)$ is a CNFA and $a,b\in\Sigma$ are such that $a\subseteq b$. If $\mathcal{A}$ is D3-directing, then the automaton $(Q,\Sigma\setminus\left\{a\right\})$ is D3-directing as well with the same shortest synchronizing word length. Also the identity symbol has no influence on synchronization. Therefore, a CNFA is called \emph{basic} if it has no identity symbol and no symbol is contained in another one. For DFAs this coincides with the existing notion of basic.

If $\mathcal{A} = (Q,\Sigma)$ and $\mathcal{B} = (Q,\Gamma)$ are CNFAs, we say that $\mathcal{B}$ is contained in $\mathcal{A}$ and write $\mathcal{B}\subseteq\mathcal{A}$ if for all $b\in\Gamma$ there exists $a\in\Sigma$ such that $b\subseteq a$. Alternatively, we say that $\mathcal{A}$ is an extension of $\mathcal{B}$. If $\mathcal{B}\subseteq\mathcal{A}$ and $\mathcal{B}\neq\mathcal{A}$, we say that $\mathcal{A}$ strictly contains (or is a strict extension of) $\mathcal{B}$.  A critical CNFA is {\em minimal} if it is not the strict extension of another critical CNFA; it is {\em maximal} if it does not admit a basic critical strict extension.

Finally, for $w =w_1\ldots w_k\in \Sigma^*$ and $S \subseteq Q$, define $Sw$ inductively by $S \epsilon = S$ and $S w = (Sw_1\ldots w_{k-1})w_k$. So a word also is a function on $2^Q$, being the composition of the transformations by each of its letters. Therefore also the transition monoid $\Sigma^*$ is contained in $T(Q)$. 

\section{Transforming a CNFA into a DFA, preserving D3-directing word length}

In this section we present the transformation $\spl$ and explore some of its properties. We note that similar but less explicit ideas were recently used in \cite{jungers,gusev} to give bounds on the length of a positive product in a primitive set of matrices. We start by introducing a parametrized version of our transformation:
\begin{definition}\label{def:parsplit}
	Let $\mathcal{A}=(Q,\Sigma)$ be a CNFA. Fix $q_{\textit{split}}\in Q, a\in\Sigma$ and denote the set $q_{\textit{split}}a$ by $\left\{q_1,\ldots,q_m \right\}$. Define new symbols $a_1,\ldots, a_m$ on $Q$ as follows:
	$$
	q_{\textit{split}}a_i := q_i, \quad\textrm{and}\quad qa_i := qa\quad\textrm{for\ }q\neq q_{\textit{split}}, 
	$$
	and a new alphabet $\Gamma = \left\{a_1,\ldots,a_m\right\}\cup( \Sigma\setminus\left\{a\right\}).$ The CNFA $(Q,\Gamma)$ will be denoted
	$\spl(\mathcal{A},q_{\textit{split}},a)$.
\end{definition}

The idea of this transformation is that we want to make a CNFA $\mathcal{A}$ `more deterministic'. If $|q_{\textit{split}}a|\geq 2$, then multiple outgoing edges in the state $q_{\textit{split}}$ are labelled by the symbol $a$. So we could say that $a$ offers a choice in $q_{\textit{split}}$. For each possible choice, we introduce a new symbol that is deterministic in $q_{\textit{split}}$ and behaves as $a$ in all other states. If $|q_{\textit{split}}a|=1$, then $\mathcal{A}$ is not changed by the transformation. This definition immediately implies the following properties:

\begin{lemma}\label{lemma1} 
	Let $\mathcal{A}=(Q,\Sigma)$ be a CNFA. Fix $q_{\textit{split}}\in Q, a_s\in\Sigma$ and let $(Q,\Gamma)$ be $\spl(\mathcal{A},q_{\textit{split}},a_s)$. Let $c\in T^d(Q)$ be an arbitrary deterministic symbol. Then
	\begin{enumerate}
		\item for all $b\in \Gamma$, there exists $a\in\Sigma$ such that $b\subseteq a$,
		\item (there exists $a\in\Sigma$ such that $c\subseteq a$) $\iff$ (there exists $b\in\Gamma$ such that $c\subseteq b$). 
	\end{enumerate} 
\end{lemma}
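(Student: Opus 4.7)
The plan is to unfold the definition of $\spl$ and argue by case analysis on the source of each symbol, with no real ideas needed beyond careful bookkeeping. Writing $q_{\textit{split}}a_s = \{q_1,\ldots,q_m\}$ and $\Gamma = \{a_1,\ldots,a_m\}\cup(\Sigma\setminus\{a_s\})$, every $b\in\Gamma$ is either an unchanged old letter or one of the newly introduced $a_i$'s.

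For claim (1), if $b\in\Sigma\setminus\{a_s\}$ then $b\subseteq b$ with $b\in\Sigma$ and we are done. Otherwise $b=a_i$ for some $i\in\{1,\ldots,m\}$, and I would verify $a_i\subseteq a_s$ state by state directly from Definition~\ref{def:parsplit}: at $q_{\textit{split}}$ we have $q_{\textit{split}}a_i=\{q_i\}\subseteq\{q_1,\ldots,q_m\}=q_{\textit{split}}a_s$, while at every other state $q$ we have $qa_i=qa_s$ by construction. Reading $a_i$ and $a_s$ as edge sets, this gives $a_i\subseteq a_s$.

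For claim (2), the backward direction is immediate by transitivity from (1): if $c\subseteq b\in\Gamma$ and $b\subseteq a\in\Sigma$, then $c\subseteq a$. For the forward direction, suppose $c\subseteq a\in\Sigma$. If $a\neq a_s$ then $a\in\Gamma$ and we set $b=a$. The only nontrivial case is $a=a_s$, and here the determinism hypothesis on $c$ is essential: $q_{\textit{split}}c$ is a singleton, say $\{q_j\}$, and since $c\subseteq a_s$ we have $q_j\in q_{\textit{split}}a_s=\{q_1,\ldots,q_m\}$, so $j$ is a valid index. I then check $c\subseteq a_j$: at $q_{\textit{split}}$ both sides equal $\{q_j\}$, and at any other state $q$ we have $qc\subseteq qa_s=qa_j$.

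The proof has no serious obstacle; the one thing to watch is that the determinism of $c$ is used precisely to pick out a single index $j$. Without it, $q_{\textit{split}}c$ could hit several of the $q_i$'s at once, and no single $a_i$ would need to contain $c$, which is why the equivalence in (2) is stated only for $c\in T^d(Q)$.
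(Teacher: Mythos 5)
Your proof is correct and follows essentially the same route as the paper's: a case analysis on whether a symbol of $\Gamma$ is an old letter or one of the new $a_i$'s for part (1), and for part (2) the backward direction by transitivity from (1) and the forward direction by using the determinism of $c$ at $q_{\textit{split}}$ to select the appropriate split symbol. Your explicit case split on $a=a_s$ versus $a\neq a_s$ in the forward direction is just a slightly more detailed rendering of the same argument the paper gives by invoking Definition~\ref{def:parsplit} directly.
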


\begin{proof} 
	Let $b\in\Gamma$, we will find $a$ with the property claimed in the lemma. If $b\in\Gamma\cap\Sigma$, take $a=b$. If  $b\in\Gamma\setminus\Sigma$, then $b$ is one of the new symbols $a_1,\ldots a_m$. In this case take $a=a_s$. Then $qb = qa$ for $q\neq q_{\textit{split}}$ and $qb\in qa$ for $q=q_{\textit{split}}$. This proves the first statement.
	
	Suppose $a\in\Sigma$ such that $c\subseteq a$, so $qc\in qa$ for all $q$. Then $q_{\it{split}}c\in q_{\it{split}}a$. By Definition \ref{def:parsplit}, there exists $b\in\Gamma$ such that $q_{\it{split}}b= q_{\it{split}}c$ and $qb=qa$ for $q\neq q_{\it{split}}$. This means $c\subseteq b$. Now suppose $b\in\Gamma$ such that $c\subseteq b$. By statement 1 of the lemma, there exists $a\in\Sigma$ such that $b\subseteq a$, which implies $c\subseteq a$.  
\end{proof}

The parametrized $\spl$ preserves synchronization properties, as is shown in the next lemma.

\begin{lemma}\label{lemma2} Let $\mathcal{A}=(Q,\Sigma)$ be a CNFA and let $\mathcal{B} = \spl(\mathcal{A},q_{\textit{split}},a)$ for some $q_{\textit{split}}\in Q, a\in\Sigma$. Then 
	\begin{enumerate}
		\item $\mathcal{A}$ is D3-directing if and only if $\mathcal{B}$ is D3-directing, 
		\item If $\mathcal{A}$ and $\mathcal{B}$ are D3-directing, then $d_3(\mathcal{A}) = d_3(\mathcal{B})$. 
	\end{enumerate}
\end{lemma}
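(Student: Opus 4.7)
The plan is to prove both inequalities $d_3(\mathcal{A}) \leq d_3(\mathcal{B})$ and $d_3(\mathcal{B}) \leq d_3(\mathcal{A})$; the equivalence of D3-directability in the two automata (statement 1) will fall out of the same constructions.

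The inequality $d_3(\mathcal{A}) \leq d_3(\mathcal{B})$ is the easy direction: I would take a shortest D3-directing word $u\in\Gamma^*$ for $\mathcal{B}$ with synchronizing state $q_s$ and form $u'\in\Sigma^*$ by replacing every occurrence of each new letter $a_i$ by $a$. By part 1 of Lemma \ref{lemma1}, each $b\in\Gamma$ satisfies $b\subseteq a'$ for some $a'\in\Sigma$; hence $qb\subseteq qa'$ for every $q$, and a straightforward induction on word length gives $qu\subseteq qu'$ for all $q\in Q$. Therefore $q_s\in qu$ forces $q_s\in qu'$, so $u'$ is D3-directing for $\mathcal{A}$ with the same length.

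The non-trivial direction is $d_3(\mathcal{B}) \leq d_3(\mathcal{A})$, because each $a_i$ has strictly fewer edges at $q_{\textit{split}}$ than $a$ and one must commit to a single successor at every occurrence. Given a shortest D3-directing word $w = w_1\ldots w_k\in\Sigma^*$ for $\mathcal{A}$ with synchronizing state $q_s$, I would build $w' = w'_1\ldots w'_k\in\Gamma^*$ of the same length by backward induction on $i$. Introduce the backward reachability sets
\[
T_i = \left\{p\in Q : q_s \in pw_{i+1}\ldots w_k\right\},\qquad 0\leq i\leq k,
\]
so $T_k=\{q_s\}$ and $T_0=Q$. Let $T^{\mathcal{B}}_i$ denote the analogous sets for $w'$ in $\mathcal{B}$, and maintain the invariant $T^{\mathcal{B}}_i=T_i$ while $i$ decreases from $k$ to $0$. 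For $w_i\neq a$, set $w'_i=w_i$, which preserves the invariant trivially. For $w_i=a$, only the action at $q_{\textit{split}}$ depends on the choice of $w'_i\in\{a_1,\ldots,a_m\}$, since $qa_j=qa$ for $q\neq q_{\textit{split}}$. If $q_{\textit{split}}\notin T_{i-1}$ then $q_{\textit{split}}a\cap T_i=\emptyset$, so $q_j\notin T_i$ for every $j$ and any choice preserves the invariant; if $q_{\textit{split}}\in T_{i-1}$ then the definition of $T_{i-1}$ supplies some $q_j\in T_i=T^{\mathcal{B}}_i$, and picking $w'_i=a_j$ keeps $q_{\textit{split}}$ in $T^{\mathcal{B}}_{i-1}$. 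At $i=0$ we conclude $T^{\mathcal{B}}_0=T_0=Q$, i.e.\ $w'$ is D3-directing for $\mathcal{B}$ with synchronizing state $q_s$ and length $k$. The main obstacle is exactly this backward construction, and the invariant $T^{\mathcal{B}}_i=T_i$ is the right bookkeeping tool because it certifies that an admissible local choice of $q_j$ exists precisely at those positions where it is needed.
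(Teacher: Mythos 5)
Your proof is correct. The direction $d_3(\mathcal{A})\leq d_3(\mathcal{B})$ is essentially identical to the paper's: replace each new symbol by the original $a$, and propagate the inclusion $qb\subseteq qa'$ from Lemma \ref{lemma1} through the word. For the harder direction $d_3(\mathcal{B})\leq d_3(\mathcal{A})$ you take a genuinely different route. The paper argues \emph{forwards}: it fixes, for each starting state, an explicit path labelled by $w$ ending in $q_s$, normalizes so that paths never diverge again once they have met, and then at each position with $w_t=a$ where some path currently sits at $q_{\textit{split}}$ it selects the letter $a_j$ continuing that path. Your \emph{backward} construction with the reachability sets $T_i=\{p: q_s\in pw_{i+1}\cdots w_k\}$ and the invariant $T_i^{\mathcal{B}}=T_i$ reaches the same conclusion without introducing explicit paths: the recursion $T_{i-1}=\{p: pw_i\cap T_i\neq\emptyset\}$ shows that exactly when $q_{\textit{split}}\in T_{i-1}$ there is a witness successor $q_j\in T_i$ to commit to, and any choice is harmless otherwise. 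The two arguments are of comparable length, but yours is arguably a bit more robust: the paper's non-divergence normalization is precisely what prevents two different paths from demanding different successors of $q_{\textit{split}}$ at the same time step, a complication your invariant dissolves automatically since only one successor in $T_i$ is ever needed. Both constructions preserve length in both directions, so statement 1 and the equality $d_3(\mathcal{A})=d_3(\mathcal{B})$ follow as you indicate.
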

 
\begin{proof} Let $\Gamma$ be the alphabet of $\mathcal{B}$ and denote the new labels by $a_1,\ldots, a_m$. First assume that $\mathcal{A}$ is D3-directing. There exist $w\in\Sigma^*$ and $q_s\in Q$ such that $q_s\in qw$ for all $q\in Q = \left\{q_1,\ldots,q_n\right\}$. From each state $q$ there exists a path labelled by $w=w_1\ldots w_{|w|}$ that ends in $q_s$: 
\begin{eqnarray*}
		q_1 = q_1^0 &\stackrel{w_1}{\longrightarrow}& q_1^1\ \stackrel{w_2}{\longrightarrow}\quad \ldots\quad \stackrel{w_{|w|}}{\longrightarrow}\ q_1^{|w|} = q_s\\
		q_2 = q_2^0 &\stackrel{w_1}{\longrightarrow}& q_2^1\ \stackrel{w_2}{\longrightarrow}\quad \ldots\quad \stackrel{w_{|w|}}{\longrightarrow}\ q_2^{|w|} = q_s\\
		&\vdots&\\
		q_n = q_n^0 &\stackrel{w_1}{\longrightarrow}& q_n^1\ \stackrel{w_2}{\longrightarrow}\quad \ldots\quad \stackrel{w_{|w|}}{\longrightarrow}\ q_n^{|w|} = q_s\\
\end{eqnarray*}
We will construct a word $\tilde w = \tilde w_1\ldots\tilde w_{|w|}\in\Gamma^*$ which follows the same paths. We may assume that paths do not diverge again once they have met, i.e. if $q_i^t = q_j^t$, then $q_i^{t+1} = q_j^{t+1}$. 

Let $Q_t = \left\{q_1^t,\ldots,q_n^t\right\}$ and suppose $w_t=a$ for some $1\leq t\leq |w|$. If $q_{\textit{split}}\not\in Q_{t-1}$, define $\tilde w_t$ to be $a_1$. Then $qw_t = q\tilde w_t$ for all $q\in Q_{t-1}$. If $q_{\textit{split}}\in Q_{t-1}$,  

then $q_i^{t-1} = q_{\textit{split}}$ for some $1\leq i\leq n$. This means $q_i^t\in q_{\textit{split}}a$, so there exists $\tilde a\in\Gamma$ such that $q_i^t=q_{\textit{split}}\tilde a$. Define $\tilde w_t$ to be $\tilde a$. Finally, for all $w_t\neq a$, let $\tilde w_t = w_t$. Then $q_s\in q\tilde w$ for all $q\in Q$, so $\tilde w$ is a D3-directing word for $\mathcal{B}$.

Now assume that $\mathcal{B}$ is D3-directing with D3-directing word $w\in\Gamma^*$ and synchronizing state $q_s$. By repeated application of Lemma \ref{lemma1} (replacing every symbol of $w$ that is not in $\Sigma$ by $a$), it follows that there exists $\tilde w\in\Sigma^*$ such that  that $qw \subseteq q\tilde w$ for all $q$. Therefore $q_s\in q\tilde w$ and the word $\tilde w$ is D3-directing for $\mathcal{A}$.  

The above arguments prove the first statement of the lemma. Clearly, rewriting a D3-directing word from $\Sigma^*$ to $\Gamma^*$ and vice versa preserves the length. This implies the second statement. 
\end{proof}

Next we investigate the result of applying consecutive parametrized $\spl$ transformations to all non-deterministic symbols in a CNFA $\mathcal{A}$. We will show that this terminates and that the result is a uniquely defined DFA.

\begin{lemma}\label{lemma3} Let $\mathcal{A}_0$ be a CNFA, and repeat the following. If $\mathcal{A}_k = (Q,\Sigma_k,\delta)$ is not a DFA, choose $a_k\in\Sigma_k$ and $q_k\in Q$ for which $|q_ka_k|\geq 2$. Let $\mathcal{A}_{k+1} = \spl(\mathcal{A}_k,q_k,a_k)$. Then
	\begin{enumerate}
		\item There exists $k\geq 0$ where this process ends such that $\mathcal{A}_k$ is a DFA.
		\item The resulting DFA does not depend on the choices of $a_k$ and $q_k$. 
	\end{enumerate}
\end{lemma}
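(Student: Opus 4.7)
My plan for (1), termination, is to exhibit a strictly decreasing integer-valued potential. I would set
$$\Phi(\mathcal{A}) = \sum_{a \in \Sigma} 3^{|a|},$$
where $|a|$ denotes the number of edges in the relation $a \subseteq Q \times Q$. A split step replaces $a$ (with $m = |q_{\textit{split}}\,a| \geq 2$) by new symbols $a_1, \ldots, a_m$; each $a_i$ agrees with $a$ outside $q_{\textit{split}}$ and carries a single edge from $q_{\textit{split}}$, so $|a_i| = |a| - (m-1)$. The total contribution of the newly introduced symbols is therefore at most $m \cdot 3^{|a|-m+1} = (m/3^{m-1}) \cdot 3^{|a|}$, and since $m < 3^{m-1}$ for all $m \geq 2$, this is strictly less than the $3^{|a|}$ that the removed symbol $a$ was contributing. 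Hence $\Phi$ strictly decreases at every step, and as it is a positive integer the process must terminate.

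For (2), uniqueness, I would identify an invariant preserved under splitting that coincides with the alphabet once the process has stopped. Define
$$U = \{c \in T^d(Q) : \exists a \in \Sigma_0,\ c \subseteq a\},$$
a set depending only on $\mathcal{A}_0$. Applying the equivalence in Lemma~\ref{lemma1}.2 inductively along the chain $\mathcal{A}_0, \mathcal{A}_1, \ldots, \mathcal{A}_k$ yields, for every $k$,
$$\{c \in T^d(Q) : \exists a \in \Sigma_k,\ c \subseteq a\} = U.$$
When the process terminates with $\Sigma_k \subseteq T^d(Q)$, I would conclude $\Sigma_k = U$ as follows: every $b \in \Sigma_k$ is itself deterministic and satisfies $b \subseteq b$, hence $b \in U$; conversely, any $c \in U$ satisfies $c \subseteq b$ for some $b \in \Sigma_k$, and two deterministic symbols with $c \subseteq b$ (as subsets of $Q \times Q$) must agree pointwise, so $c = b \in \Sigma_k$. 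Thus the resulting DFA is determined solely by $\mathcal{A}_0$.

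The hard part is guessing the right termination measure. Naive candidates — the total edge count $\sum_a |a|$, or the number of non-deterministic pairs $(q,a)$ with $|qa|\geq 2$ — can actually \emph{increase} during a split, because the $m$ replicas $a_1,\ldots,a_m$ each inherit every non-deterministic entry of $a$ outside $q_{\textit{split}}$. The resolution is to use a super-linear weight such as $3^{|a|}$, so that the exponential shrinkage $3^{-(m-1)}$ dominates the proliferation factor $m$. Once this potential is in place, the uniqueness claim follows with very little additional work, with Lemma~\ref{lemma1}.2 doing the heavy lifting.
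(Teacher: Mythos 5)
Your proposal is correct, and part (2) is essentially the paper's own argument: the paper likewise propagates the equivalence in Lemma~\ref{lemma1}(2) along the splitting chain to conclude that the terminal alphabet is exactly the set of deterministic symbols contained in some symbol of $\Sigma_0$ (this characterization is recorded again as Lemma~\ref{lemma4}), with the same final observation that containment between two deterministic symbols forces equality. The genuine difference is in part (1). The paper uses the potential $\prod_{q\in Q}\prod_{a\in\Sigma_k}|qa|$ and asserts it strictly decreases at every split, while you use the exponentially weighted sum $\sum_{a\in\Sigma_k}3^{|a|}$. Your extra care is not wasted: under a split of $a$ at $q_{\textit{split}}$ the paper's product replaces the factor $m\cdot P$ by (at most) $P^m$, where $P=\prod_{r\neq q_{\textit{split}}}|ra|$ and $m=|q_{\textit{split}}a|$, and if $a$ is non-deterministic at other states as well ($P\geq 2$) this need not decrease --- e.g.\ for $Q=\{1,2,3\}$ and a single symbol $a$ with $1a=\{1,2,3\}$, $2a=\{1,2\}$, $3a=\{3\}$, splitting at state $1$ moves the product from $6$ to $8$. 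So the paper's measure suffers from exactly the proliferation phenomenon you flag for the naive counts, whereas your bound $m\cdot 3^{|a|-m+1}<3^{|a|}$ yields a strict decrease for every admissible choice of $q_k,a_k$, which is what the lemma (with arbitrary choices) requires; both arguments then finish identically by noting that a strictly decreasing sequence of positive integers is finite. In short: same uniqueness proof, but your termination measure is a sounder variant of the paper's.
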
 

\begin{proof}
	If $\mathcal{A}_k$ is a DFA, then $|qa|=1$ for all $q\in Q$ and $a\in \Sigma_k$. If $\mathcal{A}_k$ is not a DFA, then
	\begin{equation*}
		\prod_{q\in Q}\prod_{a\in \Sigma_k} |qa| > \prod_{q\in Q}\prod_{a\in \Sigma_{k+1}} |qa|\geq 1.
	\end{equation*} 
	As this integer sequence is strictly decreasing, it ends in 1, i.e. there exists $k\geq 0$ such that the $k$th term is equal to 1. This is equivalent to the first claim of the lemma.
	
	For the second claim, choose $k$ such that $\mathcal{A}_k$ is a DFA. Let $c\in T^d(Q)$ be an arbitrary deterministic symbol. By repeated application of the second statement of Lemma \ref{lemma1}, it follows that $c\in\Sigma_k$ if and only if there exists $a\in\Sigma_0$ for which $c\subseteq a$. Therefore the DFA $\mathcal{A}_k$ does not depend on the splitting choices, proving the second statement. 	 	
\end{proof}

\begin{definition}
	Let $\mathcal{A}=(Q,\Sigma)$ be a CNFA. The unique DFA that is produced by repeated application of the parametrized $\spl$ will be called $\spl(\mathcal{A})$.
\end{definition}

Lemma \ref{lemma3} guarantees that $\spl{\mathcal{(A)}}$ is well-defined. Extension of Lemma \ref{lemma1} leads to the following characterization:

\begin{lemma}\label{lemma4} 
	Let $\mathcal{A}=(Q,\Sigma)$ be a CNFA. Denote the DFA $\spl(\mathcal{A})$ by $(Q,\Gamma)$. Let $b\in T^d(Q)$ be an arbitrary deterministic symbol. Then $b\in\Gamma$ if and only if there exists $a\in\Sigma$ such that $b\subseteq a$.
\end{lemma}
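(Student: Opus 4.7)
The plan is to prove the statement by induction on the number $k$ of applications of the parametrized $\spl$ needed to transform $\mathcal{A}$ into the DFA $\spl(\mathcal{A})$; Lemma \ref{lemma3} guarantees that such a finite $k$ exists and that the resulting DFA is independent of the choices made. Write $\mathcal{A}_0=\mathcal{A}$, $\Sigma_0=\Sigma$, $\mathcal{A}_k=\spl(\mathcal{A})$ and $\Sigma_k=\Gamma$, and, for each intermediate $i$, apply Lemma \ref{lemma1} to the step $\mathcal{A}_i\to\mathcal{A}_{i+1}$.

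For the forward direction, assume $b\in\Gamma=\Sigma_k$. A straightforward induction using the first statement of Lemma \ref{lemma1} shows that for every $b\in\Sigma_i$ there is some $a\in\Sigma$ with $b\subseteq a$: the base case is trivial, and in the inductive step any $b\in\Sigma_{i+1}$ is contained in some $a'\in\Sigma_i$, which by the induction hypothesis is contained in some $a\in\Sigma$, whence $b\subseteq a$ by transitivity of $\subseteq$.

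For the converse, suppose $b\in T^d(Q)$ and there exists $a\in\Sigma=\Sigma_0$ with $b\subseteq a$. The second statement of Lemma \ref{lemma1}, applied to each step $\mathcal{A}_i\to\mathcal{A}_{i+1}$ with the deterministic witness $b$, yields by induction on $i$ some $a_i\in\Sigma_i$ with $b\subseteq a_i$; in particular there exists $a_k\in\Gamma$ with $b\subseteq a_k$. Since $\Gamma\subseteq T^d(Q)$, the symbol $a_k$ is deterministic, so $|qa_k|=1$ and $|qb|=1$ for every $q\in Q$; the inclusion $qb\subseteq qa_k$ then forces $qb=qa_k$ for all $q$, hence $b=a_k\in\Gamma$.

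The main obstacle is not so much the induction itself (which is essentially the bookkeeping already foreshadowed in the proof of Lemma \ref{lemma3}) as the final collapse step in the converse: one needs the hypothesis $b\in T^d(Q)$ to conclude equality from the containment $b\subseteq a_k$. Without determinism of $b$ the iff would fail, because a non-deterministic $b$ properly contained in some $a\in\Sigma$ need not survive as a symbol in $\Gamma$.
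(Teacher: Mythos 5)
Your proposal is correct and follows essentially the same route as the paper: the forward direction by iterating the first statement of Lemma \ref{lemma1} along the chain of $\spl$ steps, and the converse by iterating its second statement to obtain some $b'\in\Gamma$ with $b\subseteq b'$, then using determinism of both symbols to conclude $b=b'$. Your version merely makes the induction bookkeeping explicit, which the paper leaves implicit.
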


\begin{proof}
	If $b\in\Gamma$, repeatedly apply the first statement of Lemma \ref{lemma1}. If there exists $a\in\Sigma$ such that $b\subseteq a$, then repeated application of the second statement of Lemma \ref{lemma1} proves existence of $b'\in\Gamma$ such that $b\subseteq b'$. Since both $b$ and $b'$ are deterministic symbols, $b=b'$, so $b\in\Gamma$.  
\end{proof}
Moreover, we have the following:

\begin{corollary}\label{cor1}
	If $\mathcal{A}=(Q,\Sigma)$ is a D3-directing CNFA, then $d_3(\mathcal{A}) = d(\spl(\mathcal{A}))$.
\end{corollary}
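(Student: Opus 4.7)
The plan is to chain together Lemma \ref{lemma2} and Lemma \ref{lemma3}, doing a short induction over the sequence of single splits produced in Lemma \ref{lemma3}, and then use the fact that on a DFA the notions of D3-directing word and synchronizing word coincide.

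First I would invoke Lemma \ref{lemma3} to fix a concrete sequence
$\mathcal{A}_0 = \mathcal{A}, \mathcal{A}_1, \ldots, \mathcal{A}_k = \spl(\mathcal{A})$,
where each $\mathcal{A}_{i+1}=\spl(\mathcal{A}_i,q_i,a_i)$ is obtained by a single parametrized split and $\mathcal{A}_k$ is a DFA. Since the resulting DFA is independent of the splitting choices, this concrete sequence can be used to compute $d(\spl(\mathcal{A}))$.

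Next I would run induction on $i$ using Lemma \ref{lemma2}. The base case $i=0$ is by assumption: $\mathcal{A}_0=\mathcal{A}$ is D3-directing. For the inductive step, applying part~1 of Lemma \ref{lemma2} to the pair $(\mathcal{A}_i,\mathcal{A}_{i+1})$ shows that $\mathcal{A}_{i+1}$ is D3-directing; part~2 then gives $d_3(\mathcal{A}_{i+1}) = d_3(\mathcal{A}_i)$. After $k$ applications this yields $d_3(\spl(\mathcal{A})) = d_3(\mathcal{A})$.

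Finally, I would note that $\spl(\mathcal{A})$ is a DFA, and for any DFA a D3-directing word is exactly a synchronizing word: the condition $q_s\in qw$ forces $qw=\{q_s\}$ since $|qw|=1$ in the deterministic setting. Hence the shortest D3-directing word of $\spl(\mathcal{A})$ has the same length as the shortest synchronizing word, i.e.\ $d_3(\spl(\mathcal{A})) = d(\spl(\mathcal{A}))$. Combining with the previous paragraph gives $d_3(\mathcal{A}) = d(\spl(\mathcal{A}))$, as required.

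There is no real obstacle here: the whole content is packed into Lemmas \ref{lemma2} and \ref{lemma3}, and the corollary is essentially the bookkeeping that puts them together. The only small point to be explicit about is that when we write $d(\spl(\mathcal{A}))$ we mean the shortest synchronizing word of the DFA $\spl(\mathcal{A})$, which coincides with its shortest D3-directing word; this identification is what lets the inductively preserved quantity $d_3(\mathcal{A}_i)$ be read off at the end as $d(\spl(\mathcal{A}))$.
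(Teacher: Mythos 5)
Your proposal is correct and is exactly the argument the paper intends: the paper's one-line proof ("immediate consequence of Lemma \ref{lemma2}") implicitly contains the same induction along the split sequence from Lemma \ref{lemma3} and the identification of D3-directing with synchronizing words for DFAs that you spell out. No gaps; your version just makes the bookkeeping explicit.
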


\begin{proof} This is an immediate consequence of Lemma \ref{lemma2}. \end{proof}

Now also the main result of this section is straightforward:

\begin{theorem}\label{theorem:main} The maximal shortest D3-directing word length for DFAs is the same as for CNFAs, i.e. 
	$$\displaystyle d(n)= cd_3(n).$$
\end{theorem}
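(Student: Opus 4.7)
The plan is to prove both inequalities $d(n)\leq cd_3(n)$ and $cd_3(n)\leq d(n)$ separately, with almost all of the work already done by the preceding machinery.

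For the lower bound $d(n)\leq cd_3(n)$, I would simply note that every DFA is a CNFA, and that for a DFA the notions of synchronizing word and D3-directing word coincide (any synchronizing word $w$ with target state $q_s$ satisfies $qw=\{q_s\}\ni q_s$ for all $q$, and conversely the single-element sets $qw$ forced by determinism together with $q_s\in qw$ force $qw=\{q_s\}$). Consequently, a critical DFA on $n$ states realizing $d(n)$ sits inside $\mathrm{CDir}(3)$ and contributes its shortest-synchronizing-word length to the maximum defining $cd_3(n)$.

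For the upper bound $cd_3(n)\leq d(n)$, the plan is to fix an arbitrary D3-directing CNFA $\mathcal{A}=(Q,\Sigma)$ with $|Q|=n$ and apply Corollary~\ref{cor1}, which gives $d_3(\mathcal{A}) = d(\spl(\mathcal{A}))$. The key observation is that the split transformation never changes the state set $Q$; it only refines the alphabet. Hence $\spl(\mathcal{A})$ is a DFA on exactly $n$ states. Iterating Lemma~\ref{lemma2} along the chain of parametrized splits from Lemma~\ref{lemma3} shows that $\spl(\mathcal{A})$ is synchronizing (as $\mathcal{A}$ is D3-directing), so $d(\spl(\mathcal{A}))\leq d(n)$ by the very definition of $d(n)$. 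Combining, $d_3(\mathcal{A})\leq d(n)$, and taking the maximum over all D3-directing CNFAs with $n$ states yields $cd_3(n)\leq d(n)$.

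Together the two inequalities give equality. There is really no obstacle here, since all the substantive content lives in the splitting construction and Lemmas~\ref{lemma2}--\ref{lemma4}. The only point that deserves a brief explicit mention in the write-up is the observation that $|\spl(\mathcal{A})|=|\mathcal{A}|=n$, which is what allows us to compare $d(\spl(\mathcal{A}))$ with the worst case $d(n)$ at the same value of $n$. Everything else is a direct invocation of Corollary~\ref{cor1} together with the trivial inclusion of DFAs into CNFAs.
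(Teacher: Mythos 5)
Your proposal is correct and follows essentially the same route as the paper: the lower bound via the trivial inclusion of DFAs into CNFAs, and the upper bound via Corollary~\ref{cor1} applied to $\spl(\mathcal{A})$. The only addition is your explicit remark that $\spl$ preserves the state set so that $|\spl(\mathcal{A})|=n$, which the paper leaves implicit but which is a harmless and reasonable clarification.
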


\begin{proof}
	Since every DFA is also a CNFA and the notions of synchronization and D3-directedness coincide for DFAs, it follows that $d(n)\leq cd_3(n)$. By Corollary \ref{cor1} every CNFA $\mathcal{A}$ has a corresponding DFA $\spl\mathcal{(A)}$ with the same shortest D3-directing word length. Therefore $cd_3(n)\leq d(n)$. 
\end{proof}

This theorem establishes equivalence of Cern\'y's conjecture to Conjecture \ref{conjecture1}. It also implies the following sharpening of the upper bound for $cd_3(n)$:

\begin{corollary}
	$\displaystyle cd_3(n)\leq \frac{n^3-n}{6}.$
\end{corollary}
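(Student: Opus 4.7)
The plan is to simply chain together two facts that are already in hand at this point in the paper. The corollary asserts $cd_3(n) \le (n^3-n)/6$, and since Theorem \ref{theorem:main} has just been proved, the quantities $cd_3(n)$ and $d(n)$ are equal, so it suffices to bound $d(n)$ by $(n^3-n)/6$.

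First I would invoke Theorem \ref{theorem:main} to rewrite $cd_3(n) = d(n)$. Then I would appeal to the Pin--Frankl upper bound on the maximal shortest synchronizing word length of DFAs, which was stated in the introduction as the right-hand inequality
$$(n-1)^2 \le d(n) \le \frac{n^3-n}{6},$$
with a reference to \cite{pin}. Combining these two facts gives the desired inequality in one line.

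There is no real obstacle here; the content of the corollary is entirely absorbed into the earlier work. The only thing to be careful about is not to reprove either the Pin--Frankl bound (which is cited, not proved in this paper) or Theorem \ref{theorem:main} (already established via the $\spl$ transformation and Corollary \ref{cor1}). So the proof should literally be of the form: ``By Theorem \ref{theorem:main}, $cd_3(n) = d(n)$, and by the Pin--Frankl bound \cite{pin}, $d(n) \le (n^3-n)/6$.''
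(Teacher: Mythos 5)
Your proposal is correct and is exactly the paper's (implicit) argument: the corollary follows immediately by combining Theorem \ref{theorem:main}, which gives $cd_3(n) = d(n)$, with the cited Pin--Frankl bound $d(n) \leq (n^3-n)/6$ from the introduction. Nothing further is needed.
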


%
%

\section{Sharper bounds for several classes of CNFAs}

For several classes of DFAs the \v{C}ern\'y
conjecture has been settled, or at least better upper bounds than the cubic one for
the general case have been obtained. If the $\spl$ transform reduces a CNFA to a DFA that belongs to one of these
classes, then as a direct consequence we obtain improved bounds
for the D3-directing length in the CNFA. In this section we present a
couple of results of this type.

The general pattern of the arguments in this section is as follows. First we give the definition of a property $\mathcal{P}$ for DFAs, together with references to the best known upper bound $u_{\mathcal{P}}$ for synchronization lengths in DFAs satisfying $\mathcal{P}$. Then we give a natural extension of $\mathcal{P}$ to the class of CNFAs. Finally, we show that every CNFA $\mathcal{A}$ satisfying $\mathcal{P}$ is reduced to a DFA $\spl\mathcal{(A)}$ satisfying $\mathcal{P}$. Corollary \ref{cor1} then guarantees that the length of the shortest D3-directing word in $\mathcal{A}$ is at most $u_{\mathcal{P}}$.

\subsection{Cyclic automata}

A DFA $\mathcal{A} = (Q,\Sigma)$ is \emph{cyclic} if
one of the letters in $\Sigma$ acts as a cyclic permutation on
$Q$.
\begin{definition}
	A DFA $\mathcal{A} = (Q,\Sigma)$ with $|Q|=n$ is called cyclic if there
	exists $a\in\Sigma$ such that for all $q\in Q$
	$$
	qa^n=q \quad\textrm{and}\quad qa^k \neq q \quad\textrm{for}\quad 1\leq k\leq n-1.
	$$
\end{definition}
Equivalently, the states can be indexed in such a way that $q_ia = q_{i+1}$ for $1\leq i\leq n-1$, and $q_na=q_1$. Examples of cyclic automata include the well-known sequence
$\mathcal{C}_n$ discovered by \v{C}ern\'y.

Dubuc \cite{dubuc} proved that a synchronizing cyclic DFA has a
synchronizing word of length at most $(n-1)^2$, as predicted by
\v{C}ern\'y's conjecture. We define non-deterministic cyclic
automata as follows.
\begin{definition}\label{def:cyclic}
	A CNFA $\mathcal{A} = (Q,\Sigma)$ is called cyclic if there
	exists $a\in\Sigma$ and an indexing of the states such that
	$$
	q_{i+1}\in q_ia\quad\textrm{for}\quad 1\leq i\leq n-1,\quad\textrm{and}\quad q_1\in q_na.
	$$
\end{definition}
 Note that with this definition, a CNFA is cyclic if and only if it is the extension of a cyclic DFA.
\begin{proposition}
	If $\mathcal{A}$ is a D3-directing cyclic CNFA, then $\mathcal{A}$
	has a shortest D3-directing word of length at most $(n-1)^2$.
\end{proposition}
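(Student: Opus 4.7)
The plan is to reduce the cyclic CNFA $\mathcal{A}$ to a cyclic DFA via the $\spl$ transformation and then invoke Dubuc's bound. More precisely, I would first apply $\spl$ to $\mathcal{A}$ to get a DFA $\spl(\mathcal{A})=(Q,\Gamma)$, then show that $\spl(\mathcal{A})$ is cyclic in the DFA sense, and finally conclude by combining Dubuc's theorem with Corollary \ref{cor1}.

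The key step is to produce a deterministic cyclic permutation inside $\Gamma$. Since $\mathcal{A}$ is cyclic, Definition \ref{def:cyclic} gives a letter $a\in\Sigma$ and an indexing $q_1,\dots,q_n$ of $Q$ with $q_{i+1}\in q_ia$ for $1\leq i\leq n-1$ and $q_1\in q_na$. Define a deterministic symbol $b\in T^d(Q)$ by $q_ib=q_{i+1}$ (indices mod $n$). By construction $q_ib\in q_ia$ for every $i$, which is exactly the statement $b\subseteq a$. Lemma \ref{lemma4} then gives $b\in\Gamma$. Since $b$ acts as a cyclic permutation on $Q$, the DFA $\spl(\mathcal{A})$ is cyclic in the sense of the DFA definition.

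The remainder is routine: $\mathcal{A}$ is D3-directing, so by Lemma \ref{lemma2} (applied inductively, as in the derivation of Corollary \ref{cor1}) the DFA $\spl(\mathcal{A})$ is synchronizing with $d(\spl(\mathcal{A}))=d_3(\mathcal{A})$. Dubuc's result \cite{dubuc} gives $d(\spl(\mathcal{A}))\leq (n-1)^2$, hence $d_3(\mathcal{A})\leq (n-1)^2$.

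I do not expect a real obstacle here: the whole argument rides on the observation that the non-deterministic cyclic letter $a$ deterministically contains the cyclic permutation $b$, which immediately places $b$ in $\Gamma$ via Lemma \ref{lemma4}. The one point worth double-checking is that Lemma \ref{lemma4} really characterises $\Gamma$ by set-inclusion in a single symbol of $\Sigma$ (rather than in a union), but that is exactly its statement, so the step goes through cleanly.
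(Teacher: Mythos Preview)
Your proposal is correct and matches the paper's proof essentially line for line: define the deterministic cyclic permutation $b$ by $q_ib=q_{i+1}$ (indices mod $n$), observe $b\subseteq a$, apply Lemma~\ref{lemma4} to get $b\in\Gamma$, conclude that $\spl(\mathcal{A})$ is a cyclic DFA, and finish with Dubuc's bound via Corollary~\ref{cor1}. Your closing sanity check on Lemma~\ref{lemma4} is also on point.
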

\begin{proof} Denote $\spl\mathcal{(A)}$ by $(Q,\Gamma)$. Choose $a\in\Sigma$ and an indexing of the states as in Definition \ref{def:cyclic}. Define $b$ such that $q_ib = q_{i+1}$ for $1\leq i\leq n-1$, and $q_nb=q_1$. Then $b\subseteq a$ so Lemma \ref{lemma4} gives $b\in\Gamma$. Therefore $\spl\mathcal{(A)}$ is a cyclic DFA and the result follows.\end{proof}

\subsection{One-cluster automata}

A DFA $\mathcal{A} = (Q,\Sigma)$ is called
\emph{one-cluster} if for some letter $a\in\Sigma$, there is only
one cycle (possibly a self-loop) labelled $a$. For every $q\in Q$, the path $qaaa\ldots$ eventually ends in this cycle. One way to formally define this is:
\begin{definition}\label{def:one-clusterDFA}
	A DFA $\mathcal{A} = (Q,\Sigma)$ is called one-cluster if there
	exists $a\in\Sigma$ and $p\in Q$ such that for all $q\in Q$
	$$
	qa^k = p\quad\textrm{for\ some}\quad k\in\mathbb{N}.
	$$
\end{definition}
Note that cyclic DFAs are contained in the class of one-cluster automata.
B\'eal, Berlinkov and Perrin \cite{beal} proved that
in a synchronizing one-cluster DFA, the length of the shortest
synchronizing word is at most $2n^2-7n+7$. We define one-cluster CNFAs in the following way.
\begin{definition}\label{def:one-cluster}
	A CNFA $\mathcal{A} = (Q,\Sigma)$ is called one-cluster if there
	exists $a\in\Sigma$ and $p\in Q$ such that for all $q\in Q$
	$$
	p\in qa^k\quad\textrm{for\ some}\quad k\in\mathbb{N}.
	$$
\end{definition}

With this definition, cyclic CNFAs are a special case of one-cluster CNFAs. Like for the cyclic case, the CNFA is one-cluster if and only if it it an extension of a one-cluster DFA.

\begin{proposition}
	If $\mathcal{A}$ is a D3-directing one-cluster CNFA, then
	$\mathcal{A}$ has a shortest D3-directing word of length at most
	$2n^2-7n+7$.
\end{proposition}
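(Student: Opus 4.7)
The plan is to follow the general template laid out at the start of Section 3: show that $\spl(\mathcal{A})$ is itself a one-cluster DFA, then invoke the Béal--Berlinkov--Perrin bound of $2n^2-7n+7$ together with Corollary \ref{cor1}, which gives $d_3(\mathcal{A})=d(\spl(\mathcal{A}))\leq 2n^2-7n+7$.

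To realize this template, let $a\in\Sigma$ and $p\in Q$ be the one-cluster witnesses for $\mathcal{A}$. By Lemma \ref{lemma4}, it suffices to exhibit a deterministic symbol $b\in T^d(Q)$ with $b\subseteq a$ such that for every $q\in Q$ we have $qb^k=p$ for some $k\in\mathbb{N}$; then $b$ belongs to the alphabet of $\spl(\mathcal{A})$, and $b,p$ witness that $\spl(\mathcal{A})$ is one-cluster in the sense of Definition \ref{def:one-clusterDFA}.

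Construction of $b$: first I would show that the directed graph $G_a$ (with edge set $a\subseteq Q\times Q$) contains a simple cycle through $p$. Pick any $q^{\ast}\in pa$, which exists since $pa\neq\emptyset$; by Definition \ref{def:one-cluster} there is a walk from $q^{\ast}$ to $p$ in $G_a$, and prepending the edge $p\to q^{\ast}$ gives a closed walk through $p$, hence a simple cycle $C$ through $p$ in $G_a$. For $q\in C$, define $qb$ to be the successor of $q$ along $C$, so that $b$ cycles around $C$. For $q\notin C$, the one-cluster property provides a $G_a$-path from $q$ to $p$; this path must first enter $C$ at some vertex. By a standard BFS argument (ordering states of $Q\setminus C$ by their distance to $C$ in $G_a$), one can select, for each $q\in Q\setminus C$, a single successor $qb\in qa$ whose distance to $C$ is strictly smaller. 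This yields a well-defined function $b:Q\to Q$ with $b\subseteq a$ under which every state outside $C$ reaches $C$ in finitely many steps, and thence reaches $p$ by cycling around $C$.

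With $b$ in hand, Lemma \ref{lemma4} places $b$ in the alphabet of $\spl(\mathcal{A})$, so $\spl(\mathcal{A})$ is a one-cluster DFA, and the Béal--Berlinkov--Perrin bound combined with Corollary \ref{cor1} completes the proof. The only nontrivial point is producing $b$, and the key observation making this trivial is that $G_a$ already contains a cycle through $p$; everything else is a routine in-forest construction on a graph in which every vertex can reach a distinguished vertex.
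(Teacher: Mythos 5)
Your proposal is correct and follows essentially the same route as the paper: both proofs build a deterministic symbol $b\subseteq a$ under which every state eventually reaches $p$, invoke Lemma \ref{lemma4} to conclude $b$ is a letter of $\spl(\mathcal{A})$, hence $\spl(\mathcal{A})$ is a one-cluster DFA, and then apply Corollary \ref{cor1} with the B\'eal--Berlinkov--Perrin bound. The only difference is cosmetic: the paper defines $b$ by choosing, for each state, a successor on a shortest $a$-walk of length at least $1$ to $p$, whereas you make the cycle through $p$ and the in-forest toward it explicit via a BFS construction; the resulting functional graphs have the same structure.
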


\begin{proof} Let $\spl(\mathcal{A}) = (Q,\Gamma)$. Choose $a$ and $p$ as in Definition \ref{def:one-cluster} and denote the states by $q_1,\ldots,q_n$. For $i=1,\ldots,n$, let $k_i\geq 1$ be the smallest integer such that $p\in q_ia^{k_i}$ (note that this can also be done if $q_i=p$). Choose $q_i'\in q_ia$ such that $p\in q_i'a^{k_i-1}$. Define a symbol $b$ by $q_ib = q_i'$ for all $i$. Then $b\subseteq a$ and $q_ib^{k_i}=p$. By Lemma \ref{lemma4}, $b\in\Gamma$ and therefore $\spl(\mathcal{A})$ is a cyclic DFA. \end{proof} 

\begin{remark}
	To see if a CNFA is one-cluster, it is sufficient to check pairs of states.  A CNFA $\mathcal{A}=(Q,\Sigma)$ is one-cluster if and only if there exists $a\in\Sigma$ such that for any $q,q'\in Q$ there exist $r,s\in\mathbb{N}$ for which $qa^r\cap q'a^s \neq\emptyset$.
\end{remark}

\subsection{Monotonic automata}

\begin{definition} A DFA $\mathcal{A} = (Q,\Sigma)$ is called \emph{monotonic}
	if $Q$ admits a linear order $<$ such that for each
	$a\in\Sigma$ the map $a:Q\rightarrow Q$ preserves the order
	$<$, i.e.
	$$
	qa \leq q'a\quad\text{whenever}\quad q\leq q'.
	$$
\end{definition}

Ananichev and Volkov \cite{ananichev} proved that the length of the
shortest synchronizing word in a synchronizing monotonic DFA is at
most $n-1$. The following definition extends the notion of
monotonicity to non-deterministic automata:

\begin{definition}
	A CNFA $\mathcal{A} = (Q,\Sigma)$ is called monotonic if
	$Q$ admits a linear order $\leq$ such that for each $a\in\Sigma$
	$$
	\max\left\{qa\right\} \leq
	\min\left\{q'a\right\}\quad\text{whenever}\quad q\leq q'.
	$$
\end{definition}

Every DFA contained in a monotonic CNFA is monotonic. But for a CNFA to be monotonic, it is not sufficient that it contains a monotonic DFA. One can easily extend a monotonic DFA to a CNFA $\mathcal{A}$ for which $\spl(\mathcal{A})$ is not monotonic. Just add to the DFA a symbol that sends every state to the full state set $Q$. 

\begin{proposition}
	If $\mathcal{A}$ is a D3-directing monotonic CNFA, then
	$\mathcal{A}$ has a shortest D3-directing word of length at most
	$n-1$.
\end{proposition}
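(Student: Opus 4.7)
The plan is to follow exactly the template used earlier in this section: reduce to the DFA case via the $\spl$ transformation and invoke Ananichev and Volkov's bound. So I would first assume $\mathcal{A}=(Q,\Sigma)$ is a D3-directing monotonic CNFA with linear order $\leq$ on $Q$, and set $\spl(\mathcal{A})=(Q,\Gamma)$. By Corollary \ref{cor1} it suffices to show that the DFA $\spl(\mathcal{A})$ is monotonic with respect to the same order $\leq$, since then the Ananichev--Volkov bound gives $d(\spl(\mathcal{A}))\leq n-1$ and hence $d_3(\mathcal{A})\leq n-1$.

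The key step is verifying order-preservation of every $b\in\Gamma$. By Lemma \ref{lemma4}, each deterministic symbol $b\in\Gamma$ satisfies $b\subseteq a$ for some $a\in\Sigma$, i.e.\ $qb\in qa$ for all $q\in Q$. Now fix $q\leq q'$ in $Q$. Since $qb\in qa$ we have $qb\leq \max\{qa\}$, and similarly $\min\{q'a\}\leq q'b$. Applying monotonicity of $\mathcal{A}$ to the letter $a$ yields $\max\{qa\}\leq \min\{q'a\}$, so chaining these inequalities gives $qb\leq q'b$. Thus $b$ preserves $\leq$ as a deterministic map, which is exactly the monotonicity condition for the DFA $\spl(\mathcal{A})$.

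Having established that $\spl(\mathcal{A})$ is a monotonic DFA on the same $n$ states, the Ananichev--Volkov theorem provides a synchronizing word of length at most $n-1$, and Corollary \ref{cor1} transports this bound back to a D3-directing word of the same length for $\mathcal{A}$. I do not foresee any real obstacle: the only thing that could go wrong would be if the chosen order on $Q$ failed to be a linear order for $\spl(\mathcal{A})$, but we reuse the very same order from $\mathcal{A}$, so this is automatic. The proof is essentially a one-line chain of inequalities wrapped around the two lemmas already proved.
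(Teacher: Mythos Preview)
Your proposal is correct and follows essentially the same approach as the paper: reduce via $\spl$ and Lemma~\ref{lemma4} to show each $b\in\Gamma$ preserves the order through the chain $qb\leq\max\{qa\}\leq\min\{q'a\}\leq q'b$, then apply the Ananichev--Volkov bound together with Corollary~\ref{cor1}. The paper's proof is a terser version of exactly this argument.
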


\begin{proof} Let $\mathcal{A} = (Q,\Sigma)$ and $\spl(\mathcal{A}) = (Q,\Gamma)$. Choose $b\in\Gamma$. By Lemma \ref{lemma4} there exists $a\in\Sigma$ such that $b\subseteq a$, i.e. $qb\in
	qa$ for all $q\in Q$. The monotonicity of $\mathcal{A}$ implies
	$$ 
	qb \leq \max\left\{qa\right\} \leq
	\min\left\{q'a\right\}\leq q'b\quad\text{whenever}\quad q\leq q',
	$$
demonstrating that $\spl\mathcal{(A)}$ is a
monotonic DFA. This implies the result. \end{proof}

\subsection{Orientable automata}

A DFA $\mathcal{A} = (Q,\Sigma)$ is called
\emph{orientable} if $Q$ admits a cyclic order
$$
q_1\prec q_2\prec\ldots\prec q_n\prec q_1
$$
on $Q$, such that for each letter $a\in\Sigma$ the sequence
$q_1a,q_2a,\ldots,q_na$ is (after removal of duplicates) a
subsequence of a cyclic shift of $q_1,q_2,\ldots,q_n$. One can think of this as the order of the states on the circle being preserved under $a$. This can equivalently be formalized in terms of a linear order:
\begin{definition}
	A DFA $\mathcal{A} = (Q,\Sigma)$ is called \emph{orientable}
	if $Q$ admits a strict linear order $q_1< q_2<\ldots< q_n$
	such that for each $a\in\Sigma$ at most one of the following
	inequalities is violated:
	$$
	q_1a\leq q_2a\leq \ldots\leq q_na\leq q_1a.
	$$
\end{definition}
Eppstein \cite{E90} proved that orientable automata satisfy the
conjecture of \v{C}ern\'y: if an orientable DFA is synchronizing,
then the shortest synchronizing word has length at most $(n-1)^2$.
\v{C}ern\'y's own examples $\mathcal{C}_n, n = 2,3,\ldots$ are
orientable. We extend the notion of orientability as follows to
the non-deterministic case:
\begin{definition}
	A CNFA $\mathcal{A} = (Q,\Sigma)$ is called
	\emph{orientable} if $Q$ admits a strict linear order
	$q_1< q_2<\ldots< q_n$
	on $Q$ such that for each $a\in\Sigma$ at most one of the following
	inequalities is violated:
	$$
	\max\left\{q_1a\right\}\leq
	\min\left\{q_2a\right\}\leq\ldots\leq \min\left\{q_na\right\}\leq
	\max\left\{q_na\right\}\leq \min\left\{q_1a\right\}.
	$$
\end{definition}
As is the case for DFAs, the class of orientable CNFAs contains the monotonic CNFAs. Like in the monotonic case, every DFA contained in an orientable CNFA is orientable, but not every extension of an orientable DFA is an orientable CNFA.
\begin{proposition} If $\mathcal{A}$ is a D3-directing orientable CNFA, then
	$\mathcal{A}$ has a shortest D3-directing word of length at most
	$(n-1)^2$.
\end{proposition}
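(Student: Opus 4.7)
The plan is to follow the same template as in the three preceding propositions: denote $\spl(\mathcal{A})=(Q,\Gamma)$, prove that $\spl(\mathcal{A})$ is an \emph{orientable} DFA with respect to the same linear order $q_1<q_2<\cdots<q_n$ that witnesses orientability of $\mathcal{A}$, and then invoke Eppstein's bound together with Corollary \ref{cor1} to conclude $d_3(\mathcal{A})=d(\spl(\mathcal{A}))\leq (n-1)^2$.

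To show that $\spl(\mathcal{A})$ is orientable, I would fix an arbitrary $b\in\Gamma$ and verify that at most one of the cyclic inequalities $q_1b\leq q_2b\leq\cdots\leq q_nb\leq q_1b$ is violated. By Lemma \ref{lemma4}, there exists $a\in\Sigma$ with $b\subseteq a$, so $q_ib\in q_ia$ for every $i$, and therefore
$$
\min\{q_ia\}\ \leq\ q_ib\ \leq\ \max\{q_ia\}\qquad (1\leq i\leq n).
$$
For any index $i$ (read cyclically, with $q_{n+1}=q_1$) at which the CNFA inequality $\max\{q_ia\}\leq \min\{q_{i+1}a\}$ holds, the sandwich above yields
$$
q_ib\ \leq\ \max\{q_ia\}\ \leq\ \min\{q_{i+1}a\}\ \leq\ q_{i+1}b,
$$
so the corresponding DFA inequality $q_ib\leq q_{i+1}b$ also holds. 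By orientability of $\mathcal{A}$ at most one of the $n$ cyclic inequalities fails for $a$, so at most one fails for $b$, which is exactly the orientability condition for the deterministic symbol $b$. Since $b\in\Gamma$ was arbitrary, $\spl(\mathcal{A})$ is orientable.

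I do not expect a real obstacle here: the definition of orientable CNFA was evidently engineered so that $q_ib$ is trapped between $\min\{q_ia\}$ and $\max\{q_ia\}$, which is precisely what the argument needs. The only subtlety worth double-checking is that the single permitted violation for $b$ really must lie at the same cyclic position as the (at most one) violation for $a$, rather than being created somewhere that was fine for $a$; the displayed chain of inequalities rules this out, since wherever $\max\{q_ia\}\leq\min\{q_{i+1}a\}$ holds we deduce $q_ib\leq q_{i+1}b$ unconditionally. The argument parallels the monotonic case almost verbatim, just with the cyclic wrap-around and the bookkeeping of one exceptional inequality.
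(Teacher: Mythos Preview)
Your proof is correct and matches the paper's argument essentially line for line: pick $b\in\Gamma$, use Lemma~\ref{lemma4} to find $a\in\Sigma$ with $b\subseteq a$, and then sandwich $q_ib$ between $\min\{q_ia\}$ and $\max\{q_ia\}$ to transfer the orientability inequalities from $a$ to $b$. The paper's proof is terser (it just asserts ``it follows that at most one of the inequalities is violated''), but you have spelled out precisely the chain $q_ib\leq\max\{q_ia\}\leq\min\{q_{i+1}a\}\leq q_{i+1}b$ that underlies that assertion.
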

\begin{proof} Suppose $\mathcal{A}= (Q,\Sigma)$ is an orientable CNFA. Let
$\spl(\mathcal{A}) = (Q,\Gamma)$ and choose
$b\in\Gamma$. Then by Lemma \ref{lemma4} there exists $a\in\Sigma$ such that $b\subseteq a$, i.e. $q_kb\in
q_ka$ for all $1\leq k\leq n$. It follows that at most one of the
inequalities
$$
q_1b\leq q_2b\leq \ldots\leq q_nb\leq q_1b
$$
is violated so that $\spl\mathcal{(A)}$ is an orientable
DFA. \end{proof}

\subsection{Automata with underlying Eulerian digraph}

Every DFA $\mathcal{A} = (Q,\Sigma)$ has an underlying directed graph (digraph) with vertex set $Q$ and with edges corresponding to actions of elements of $\Sigma$. Formally, $G=(V,E)$ with $V=Q$ and $E = \bigcup_{a\in\Sigma}\left\{(p,q)\in Q^2:pa=q \right\}$, where we consider $E$ to be a multiset, which means that each edge has a multiplicity.

 A digraph is called Eulerian if it is strongly connected and all indegrees and outdegrees are the same. Kari \cite{kari} proved that a synchronizing DFA for which the underlying graph is Eulerian admits a synchronizing word of length at most $(n-2)(n-1)+1$.

For a CNFA $\mathcal{A}$ with underlying Eulerian digraph $G$ it is not necessarily the case that the underlying digraph of the DFA $\spl\mathcal{(A)}$ is Eulerian as well. However, we can decompose $G$ into directed graphs for each of the symbols in $\Sigma$: define $G_a=(V,E_a)$ with $V=Q$ and $E_a = \left\{(p,q)\in Q^2:pa=q\right\}$. The following stronger property is preserved under $\spl$:
\begin{definition}
	A CNFA $\mathcal{A}=(Q,\Sigma)$ is called strongly Eulerian if for all $a\in\Sigma$ the corresponding digraph $G_a$ is Eulerian.
\end{definition}
\begin{proposition} If $\mathcal{A}=(Q,\Sigma)$ is a D3-directing strongly Eulerian CNFA, then $\mathcal{A}$ has a shortest D3-directing word of length at most $(n-2)(n-1)+1$.
\end{proposition}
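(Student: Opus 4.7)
The plan is to follow the three-step template used throughout this section. First, Corollary~\ref{cor1} gives $d_3(\mathcal{A})=d(\spl(\mathcal{A}))$, reducing the problem to bounding the shortest synchronizing word of the DFA $\spl(\mathcal{A})$. Second, I would show that the underlying digraph of $\spl(\mathcal{A})$ is Eulerian. Third, Kari's theorem then yields $d(\spl(\mathcal{A}))\leq(n-2)(n-1)+1$, which completes the proof.

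The heart of the argument is the second step. By Lemma~\ref{lemma4}, $\Gamma$ is exactly the set of deterministic $b$ with $b\subseteq a$ for some $a\in\Sigma$. For each fixed $a\in\Sigma$, set $\Gamma_a=\{b\in T^d(Q):b\subseteq a\}$. A direct count gives $|\Gamma_a|=\prod_{p\in Q}|pa|$, and the number of $b\in\Gamma_a$ mapping a given $p$ to a given $q\in pa$ equals $\prod_{p'\neq p}|p'a|$. Summing over $p$ and using the balance $|\{p:q\in pa\}|=|qa|$ that comes from $G_a$ being Eulerian, the contribution of $\Gamma_a$ to the in-degree at each vertex $q$ of the underlying digraph equals $|\Gamma_a|$. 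Since the out-degree at every vertex is trivially $|\Gamma|$ (one out-edge per letter), it remains to aggregate the per-letter contributions to conclude that the in-degree equals $|\Gamma|$ everywhere. Strong connectivity of the underlying digraph is inherited from the strong connectivity of any single $G_a$, whose edges are all preserved in the underlying digraph of $\spl(\mathcal{A})$ by Lemma~\ref{lemma4}.

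The main obstacle is the aggregation: the sets $\Gamma_a$ are not disjoint in general (a single deterministic refinement may lie below several letters of $\Sigma$), so the naive sum over $a\in\Sigma$ overcounts both $|\Gamma|$ and the in-degree tally. An inclusion--exclusion argument applied to the intersections $\Gamma_a\cap\Gamma_{a'}=\Gamma_{a\cap a'}$ is the natural tool; making the cancellation work so that $\mathrm{indeg}(q)=|\Gamma|$ at every vertex is the combinatorial crux, and it is here that the strongly Eulerian hypothesis must be shown to propagate to the intersected relations. Once that is in place, Kari's bound finishes the proof.
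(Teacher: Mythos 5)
Your reduction via Corollary~\ref{cor1} and Kari's theorem, and your per-letter degree count for $\Gamma_a=\{b\in T^d(Q):b\subseteq a\}$, are exactly the paper's argument. But the step you yourself flag as ``the combinatorial crux''---aggregating over $a\in\Sigma$ when the sets $\Gamma_a$ overlap---is a genuine gap, and the route you propose for closing it cannot succeed, because the statement you would need at that point is false: a strongly Eulerian CNFA $\mathcal{A}$ can have $\spl(\mathcal{A})$ with a non-Eulerian underlying digraph. Take $Q=\{1,2,3\}$ with $a_1\colon 1\mapsto\{2,3\},\ 2\mapsto\{1,2\},\ 3\mapsto\{1,3\}$ and $a_2\colon 1\mapsto\{2,3\},\ 2\mapsto\{1,3\},\ 3\mapsto\{1,2\}$. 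Both $G_{a_1}$ and $G_{a_2}$ are strongly connected and $2$-regular, so $\mathcal{A}=(Q,\{a_1,a_2\})$ is strongly Eulerian. Here $|\Gamma_{a_1}|=|\Gamma_{a_2}|=8$, while $\Gamma_{a_1}\cap\Gamma_{a_2}$ consists of the two maps sending both $2$ and $3$ to $1$ (non-permutations), so $|\Gamma|=14$ and every out-degree of the underlying digraph of $\spl(\mathcal{A})$ is $14$; a direct count gives in-degrees $12,15,15$. Inclusion--exclusion via $\Gamma_{a}\cap\Gamma_{a'}=\Gamma_{a\cap a'}$ does not rescue this: $a_1\cap a_2$ is balanced at each vertex but not regular, and the weighted count $\sum_{p:\,q\in p(a_1\cap a_2)}\prod_{p'\neq p}|p'(a_1\cap a_2)|$ depends on $q$, which is exactly why the cancellation fails. (You have in fact put your finger on a hole in the paper's own proof, which performs the same per-letter computation and then asserts that all degrees of $H$ equal $\sum_i k_i^n$ ``by repeatedly applying the above argument''---tacitly assuming the $\Gamma_{a_i}$ are disjoint.)

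The proposition is nevertheless salvageable, but not by making $\spl(\mathcal{A})$ itself Eulerian. The repair is to keep each deterministic refinement tagged by its parent letter: pass to the DFA over the abstract alphabet $\bigsqcup_{a\in\Sigma}\Gamma_a$, in which a transformation lying below two letters of $\Sigma$ occurs twice. Duplicating letters changes neither synchronizability nor the shortest synchronizing word length, but it makes the underlying multidigraph $\sum_i k_i^n$-regular, and Kari's bound applies verbatim to DFAs with repeated letters; the bound then transfers to $\mathcal{A}$ by Corollary~\ref{cor1}. Two further remarks. First, your per-letter step already needs each $G_a$ to be \emph{regular} (all degrees equal to a single $k_a$), not merely balanced at every vertex: for $a\colon 1\mapsto\{2,3\},\ 2\mapsto\{1\},\ 3\mapsto\{1\}$ the in- and out-degrees agree at each vertex, yet the two refinements give vertex $1$ in-degree $4$ and out-degree $2$ in $H$. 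So you should read ``Eulerian'' in the strongly Eulerian hypothesis as ``strongly connected and regular''. Second, your claim that the out-degree is ``trivially $|\Gamma|$'' is correct and is the easy half; it is only the in-degree that carries content.
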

\begin{proof} Let $\spl(\mathcal{A})=(Q,\Gamma)$ and denote its underlying digraph by $H$. Let's first assume that $\Sigma$ is a singleton $\left\{a\right\}$ and that all in- and outdegrees of $G_a$ are equal to $k$. Suppose $q,q'\in Q$ are such that $q'\in qa$. Since all $p\neq q$ have $a$-outdegree $k$, there will be exactly $k^{n-1}$ letters $\tilde a_i\in\Gamma, i = 1,\ldots,k^{n-1}$ for which $q' = q\tilde a_i$. Doing this for all $q'\in qa$ and using that $|qa|=k$ we obtain that $q$ has outdegree $k^n$ in $H$. An analogous argument gives the same for the indegrees.

Now suppose $\Sigma$ is not a singleton, i.e. $\Sigma = \left\{a_1,\ldots,a_m\right\}$. Assume that all degrees in $G_{a_i}$ are equal to $k_i$. Then by repeatedly applying the above argument, we obtain that all degrees in $H$ are equal to $\sum_{i=1}^m k_i^n$. Clearly $H$ is also strongly connected. Therefore $H$ is Eulerian which implies the result.\end{proof}

\subsection{Aperiodic automata}

One way to define aperiodic DFAs is the following:

\begin{definition}
	A DFA $\mathcal{A}=(Q,\Sigma)$ is called aperiodic if for all $w\in\Sigma^*$ and $q\in Q$ there exists $k\geq 0$ such that $qw^k = qw^{k+1}$.
\end{definition}

An aperiodic DFA with strongly connected underlying digraph is synchronizing and has a synchronizing word of length at most $\lfloor\frac{n(n+1)}{6}\rfloor$, see \cite{volkov2}.

The definition could also be written down for CNFAs, but this property is not preserved under the $\spl$ transformation. For example, let $\mathcal{A}=(Q,\Sigma)$ with $|Q|\geq 2$ and $\Sigma = \left\{a\right\}$, where $a$ is defined by $qa = Q$ for all $q\in Q$. Then clearly $qw^k = qw^{k+1}$ for all $w\in\Sigma^*$ and $q\in Q$. We will show that $\mathcal{B}=\spl(\mathcal{A}) = (Q,\Gamma)$ admits periodic words. Let $q_1,q_2\in Q, q_1\neq q_2$ and let $b\in T^d(Q)$ be such that $q_1b=q_2$ and $q_2b=q_1$. Then $b\subseteq a$ (as $a$ contains every possible symbol), so by Lemma \ref{lemma4} it follows that $b\in \Gamma$. However, $q_1b^k = q_1$ if $k$ even and $q_1b^k = q_2$ if $k$ odd. Therefore $\mathcal{B}$ fails to be aperiodic.

\begin{proposition}\label{prop7} Suppose $\mathcal{A}=(Q,\Sigma)$ is a CNFA with the following property: for all $w\in\Sigma^*$ and $q\in Q$ there exists $k\geq 0$ such that $qw^k = qw^{k+1}$ and $|qw^k|=1$. If its underlying digraph is strongly connected, then  $\mathcal{A}$ is D3-directing and has a D3-directing word of length at most $\lfloor\frac{n(n+1)}{6}\rfloor$.
\end{proposition}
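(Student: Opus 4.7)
The plan is to follow the same template as the preceding propositions in this section: show that $\spl(\mathcal{A})$ inherits both aperiodicity and strong connectivity of its underlying digraph, and then invoke the bound of \cite{volkov2} together with Corollary \ref{cor1}.

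Strong connectivity should transfer essentially for free. It suffices to show that every edge $(p,q)$ in the underlying digraph of $\mathcal{A}$, i.e.\ every pair with $q\in pa$ for some $a\in\Sigma$, remains an edge in $\spl(\mathcal{A})$. Given such an edge, I would define a deterministic symbol $b\in T^d(Q)$ by $pb:=q$ and, for each $p'\neq p$, by picking any element of $p'a$ as $p'b$. Then $b\subseteq a$, so Lemma \ref{lemma4} gives $b\in\Gamma$, where $(Q,\Gamma)=\spl(\mathcal{A})$. Hence every edge of $\mathcal{A}$'s digraph appears in that of $\spl(\mathcal{A})$, and strong connectivity is preserved.

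The main difficulty lies in the aperiodicity step, precisely because the example immediately preceding the proposition shows that plain aperiodicity does not pass through $\spl$; the singleton condition $|qw^k|=1$ is exactly what compensates for that. To exploit it, I would take an arbitrary $v=v_1\cdots v_m\in\Gamma^*$ and $q\in Q$, and for each $v_i$ pick $a_i\in\Sigma$ with $v_i\subseteq a_i$ using Lemma \ref{lemma4}. Setting $\tilde v:=a_1\cdots a_m\in\Sigma^*$, the inclusion $Sv\subseteq S\tilde v$ for every $S\subseteq Q$ follows by a routine induction on $m$, and then $qv^j\subseteq q\tilde v^j$ for every $j\geq 0$. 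The hypothesis applied to $\tilde v$ and $q$ produces $k$ with $q\tilde v^k=q\tilde v^{k+1}$ and $|q\tilde v^k|=1$. Since $qv^k$ and $qv^{k+1}$ are nonempty subsets of the singleton $q\tilde v^k$, both coincide with it, so $qv^k=qv^{k+1}$. This is exactly aperiodicity of the DFA $\spl(\mathcal{A})$.

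With $\spl(\mathcal{A})$ aperiodic and strongly connected, the result of \cite{volkov2} yields that it is synchronizing with a synchronizing word of length at most $\lfloor n(n+1)/6\rfloor$. Repeated application of Lemma \ref{lemma2} gives that $\mathcal{A}$ itself is D3-directing, and Corollary \ref{cor1} transfers the bound to $d_3(\mathcal{A})\leq \lfloor n(n+1)/6\rfloor$, as required.
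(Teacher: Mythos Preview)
Your proof is correct and follows essentially the same route as the paper: lift an arbitrary word over $\Gamma$ to a word over $\Sigma$ via Lemma \ref{lemma4}, use the singleton hypothesis to force $qv^k=qv^{k+1}$, and observe that strong connectivity is preserved because every edge of $\mathcal{A}$ survives in $\spl(\mathcal{A})$. The only differences are cosmetic (swapped roles of $v$ and $w$) and that you spell out the strong-connectivity argument and the final appeal to \cite{volkov2} and Corollary~\ref{cor1} more explicitly than the paper does.
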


\begin{proof}
	Let $\mathcal{B}=\spl(\mathcal{A}) = (Q,\Gamma)$. Let $w = w_1w_2\ldots w_l\in\Gamma^*$. By Lemma \ref{lemma4} there exist $v = v_1v_2\ldots v_l\in\Sigma^*$ such that $w_i\subseteq v_i$ for all $i$. Let $q\in Q$ and choose $k$ such that $qv^k=qv^{k+1}$ and $|qv^k|=1$. Since $q_w^k$ is not empty and $qw^k\subseteq qv^k$ it follows that $qw^k = qv^k$ and similarly $qw^{k+1} = qv^{k+1}$. Consequently, $\mathcal{B}$ is an aperiodic DFA. Furthermore, if $\mathcal{A}$ has a strongly connected underlying digraph, then so has $\mathcal{B}$.  
\end{proof} 

CNFAs with the property of Proposition \ref{prop7} transform by $\spl$ into an aperiodic DFA. However, having this property is not necessary for being transformed into an aperiodic DFA, as the next example shows. Let $\mathcal{A}=(\left\{1,2 \right\},\left\{a\right\})$ where $a$ is defined by $1a = \left\{1,2\right\}$ and $2a = 2$. Then $|1a^k| = 2$ for all $k\geq 1$. Nevertheless $\spl(\mathcal{A})$ is aperiodic, as can be easily verified.       

\section{Investigating critical CNFAs}
In \cite{DZ17} all critical DFAs on 3 or 4 states were identified, and for $n \geq 5$ states all critical 
extensions of known critical DFAs. Recently it was confirmed \cite{BDZ17} that for $n=5$ and 6 no more critical DFAs exist.
From Corollary \ref{cor1} we know that $\spl(\mathcal{A})$ is a critical DFA for every critical
CNFA $\mathcal{A}$. So if for any DFA $\mathcal{D}$ we can investigate which CNFAs map to
$\mathcal{D}$ by $\spl$, we can combine these observations to identify all critical CNFAs with $<7$ states. For $\geq 7$ states the investigation restricts to resulting known critical DFAs. In doing so, first 
we concentrate on investigating which CNFAs 
map by $\spl$ to a given DFA, independent of size or being critical.  

For a DFA $\mathcal{D} = (Q,\Sigma)$ we define a graph structure on $\Sigma$. More
precisely, we define $G(\mathcal{D}) = (\Sigma,E)$ to be the undirected graph of which 
$\Sigma$ is the set of nodes and the set $E$ of edges is defined by
\[ \{a,b\} \in E \Longleftrightarrow \exists q \in Q: qa \neq qb \wedge \forall r \neq q : ra = rb.\]
Next we show that if $G(\mathcal{D}) = (\Sigma,E)$ then any $E' \subseteq E$ gives rise to a
CNFA $N(\mathcal{D},E')$ such that $\spl(N(\mathcal{D},E')) = \mathcal{D}$.
The CNFA $N(\mathcal{D},E') = (Q,\Sigma')$ is defined by 
\[ \Sigma' = \{a \cup b \mid \{a,b\} \in E'\} \cup \{a \mid \not\exists b : \{a,b\} \in E'\}.\]
So symbols not connected by an edge in $E'$ remain unchanged, and any two symbols $a,b$ that are 
connected by an edge in $E'$ are joined into the new symbol $a\cup b$. It is defined by $q(a \cup b) = 
\{qa,qb\}$ for the single state $q$ with $qa \neq qb$, and $r(a \cup b) = ra = rb$ for $r \neq
q$. In particular, every symbol in $N(\mathcal{D},E')$ is
non-deterministic in at most one state, and in that state only two choices are possible. 

As an example consider the following DFA $A_3^+$ on three states, and six symbols $a,b,c,d,e,f$.
In fact it is the DFA $A_3$ from \cite{DZ17}, extended by an extra symbol $f$ that acts as the
identity. On the right its graph $G(A_3^+)$ is shown: the nodes are $a,b,c,d,e,f$, and there are
three edges $\left\{b,c\right\}$, $\left\{b,f\right\}$ and $\left\{d,e\right\}$. These are exactly the pairs of symbols acting in the same way on two of the
three states.

\vspace{2mm}

\includegraphics[scale=0.25]{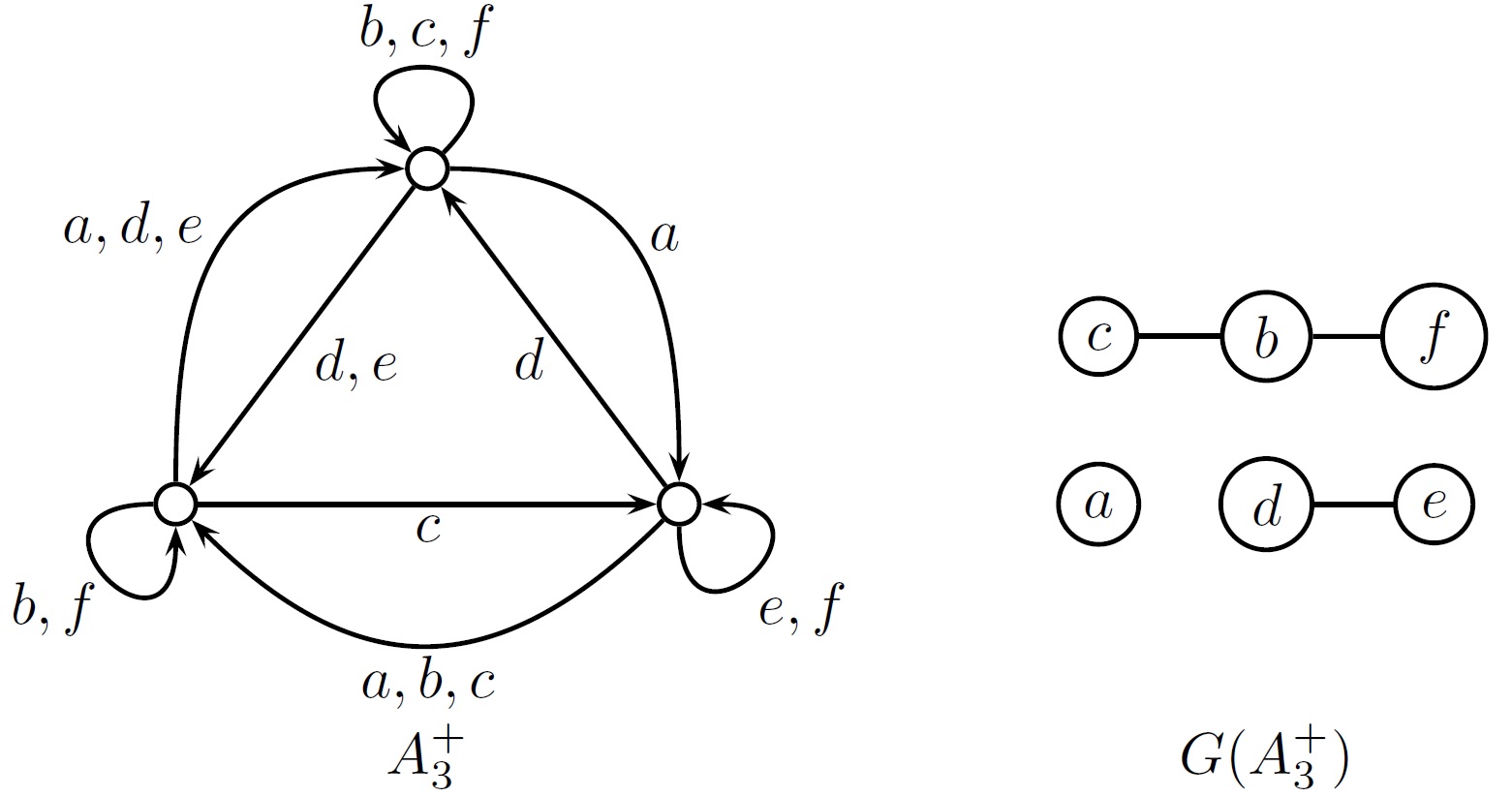}

\vspace{2mm}

The edge set $E$ of the graph $G(A_3^+)$ has size 3, so there are 8 possible choices for $E'$.
As an example we show the resulting CNFA $N(\mathcal{D},E')$ for
$E' = \{\{b,c\},\{d,e\}\}$, in which there are 4 symbols $a, b \!\cup\! c, d \!\cup\! e, f$:

\vspace{2mm}

\includegraphics[scale=0.25]{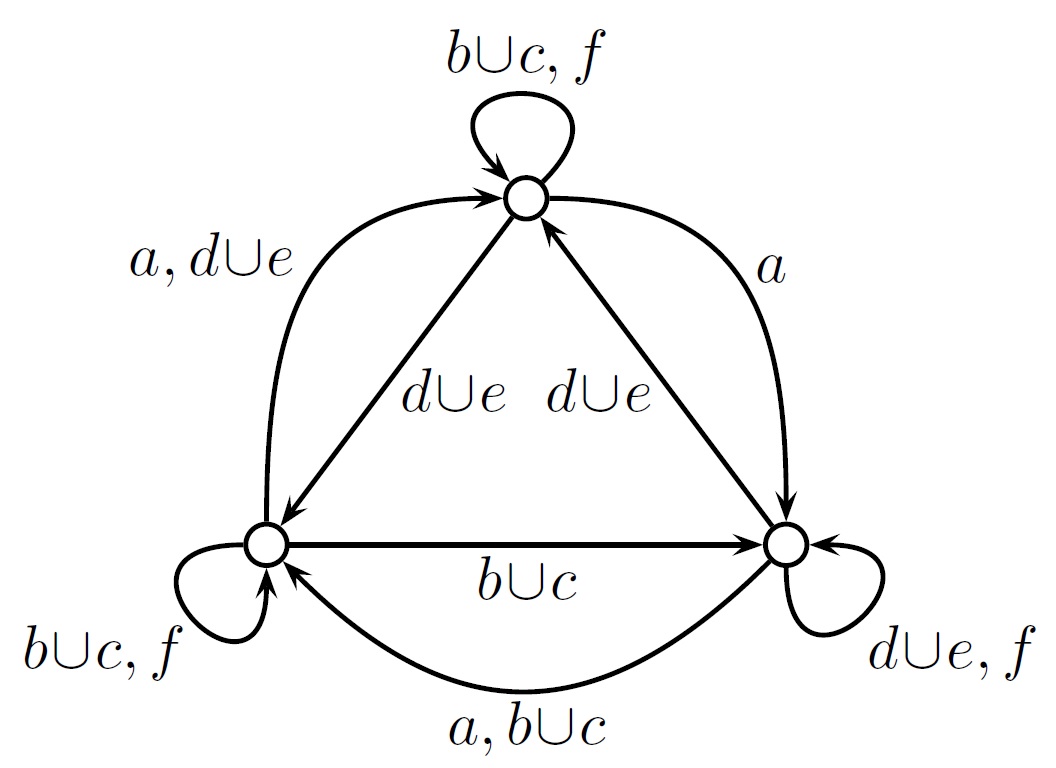}

\vspace{2mm}

The next theorem states that under some conditions for a given DFA $\mathcal{D}$ these CNFAs
$N = N(\mathcal{D},E')$ are exactly all {\em pre-basic} CNFAs $N$ for which  $\spl(N) = \mathcal{D}$.
Here a CNFA is called {\em pre-basic} if no symbol is contained in another; the difference 
with basic is that now the identity symbol is allowed.
Note that by definition every CNFA of the shape $N(\mathcal{D},E')$ is pre-basic, and also every DFA is
pre-basic.

\begin{theorem}
	\label{thmsplitinv}
	Let $\mathcal{D} = (Q,\Sigma)$ be a DFA for which $G(\mathcal{D}) = (\Sigma,E)$ does not admit cycles
	of length 3 or 4.
	Then a pre-basic CNFA $\mathcal{N} = (Q,\Sigma')$ satisfies $\spl(\mathcal{N}) = \mathcal{D}$
	if and only if $\mathcal{N} = N(\mathcal{D},E')$ for some $E' \subseteq E$.
\end{theorem}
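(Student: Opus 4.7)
The plan is to prove the two implications separately, with Lemma~\ref{lemma4} serving as the main technical tool.

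For the ``if'' direction, I would assume $\mathcal{N} = N(\mathcal{D},E')$ and determine $\spl(\mathcal{N})$ via Lemma~\ref{lemma4}. Each symbol of $\mathcal{N}$ is either a deterministic $a \in \Sigma$ or a union $a \cup b$ with $\{a,b\}\in E'$; the latter is non-deterministic in exactly the state $q$ where $a$ and $b$ differ, so a deterministic symbol contained in $a \cup b$ must agree with both $a$ and $b$ off $q$ and take one of the two values at $q$, hence equal $a$ or $b$. Collecting all deterministic subsymbols across $\mathcal{N}$ therefore yields exactly $\Sigma$, and Lemma~\ref{lemma4} gives $\spl(\mathcal{N}) = \mathcal{D}$.

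For the ``only if'' direction, take $\mathcal{N} = (Q,\Sigma')$ pre-basic with $\spl(\mathcal{N}) = \mathcal{D}$. For each $s \in \Sigma'$, set $D(s) = \{a \in \Sigma : a \subseteq s\}$. Lemma~\ref{lemma4} (applied inside $\mathcal{N}$) together with a choice-function argument shows that $D(s)$ coincides with the product $\prod_{q\in Q} qs$ and that $s = \bigcup D(s)$, so $|D(s)| = \prod_q |qs|$. The heart of the argument is to bound $|D(s)| \leq 2$. If some state $q$ has $|qs| \geq 3$, then picking three distinct values in $qs$ and completing each to a deterministic subsymbol by filling the other coordinates with one fixed ``base'' element of $D(s)$ produces three elements of $\Sigma$ that pairwise differ only at $q$, hence a triangle in $G(\mathcal{D})$. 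If instead $s$ is non-deterministic at two distinct states $q_1,q_2$, the analogous construction with two values chosen in each $q_is$ yields four elements of $\Sigma$ forming a 4-cycle in $G(\mathcal{D})$. Both contradict the hypothesis, so $|D(s)| \leq 2$, and equality forces $D(s) = \{a,b\}$ with $a,b$ differing at a single state, that is, $\{a,b\} \in E$.

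To finish, I would take $E' := \{\{a,b\} \in E : a \cup b \in \Sigma'\}$ and verify that $\Sigma'$ coincides with the alphabet of $N(\mathcal{D},E')$. The non-deterministic symbols of $\mathcal{N}$ are accounted for by the previous paragraph. For a deterministic $a \in \Sigma \cap \Sigma'$, pre-basicness excludes $a \subsetneq a \cup b$ for any $a \cup b \in \Sigma'$, so $a$ is $E'$-isolated; conversely, if $a \in \Sigma$ is $E'$-isolated, then by Lemma~\ref{lemma4} some $s \in \Sigma'$ contains $a$, and $s$ cannot be non-deterministic (else $s = c \cup d$ with $a \in \{c,d\}$, contradicting isolation), so $s = a \in \Sigma'$. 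The main hurdle will be the triangle/square construction bounding $|D(s)|$; once the base-symbol trick is in place, the only thing left to check in each case is that the constructed subsymbols are pairwise distinct and connected in $G(\mathcal{D})$ exactly along the required edges, which is routine.
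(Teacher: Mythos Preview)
Your proposal is correct and shares the essential idea with the paper's proof: both hinge on Lemma~\ref{lemma4} together with the observation that a symbol $s\in\Sigma'$ with $|qs|\geq 3$ at some state (resp.\ $|q_1s|,|q_2s|\geq 2$ at two states) would force a 3-cycle (resp.\ 4-cycle) in $G(\mathcal{D})$ among its deterministic subsymbols.

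The organizational difference is that the paper argues the ``only if'' direction by induction on the length of the $\spl$-path from $\mathcal{N}$ to $\mathcal{D}$, analyzing the first split step and invoking the induction hypothesis on the intermediate automaton $\mathcal{N}'=N(\mathcal{D},E'')$, whereas you treat each symbol $s\in\Sigma'$ directly via $D(s)=\{a\in\Sigma:a\subseteq s\}$ and the choice-function description $|D(s)|=\prod_q|qs|$. Your route is arguably cleaner: it avoids the intermediate CNFAs and the question of whether $\mathcal{N}'$ is again pre-basic, and it makes the role of the pre-basic hypothesis transparent (exactly where you exclude $a\subsetneq a\cup b$). For the ``if'' direction the paper again inducts by peeling off one edge of $E'$ at a time, while you invoke Lemma~\ref{lemma4} once; both are straightforward. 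Nothing is lost or gained in strength---the two arguments are equivalent reformulations of the same obstruction.
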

\begin{proof}
	For the `if'-part, let $\mathcal{N} = N(\mathcal{D},E')$ for $E' \subseteq E$.
	If $E' = \emptyset$ then $\mathcal{N} = \mathcal{D}$, so $\spl(\mathcal{N}) = \mathcal{D}$.
	Otherwise, let $\{a,b\} \in E'$ and $E'' = E' \setminus \{\{a,b\}\}$. 
	Since $\{a,b\} \in E$ there exists $q \in Q$ such that $qa \neq qb$ and $ra = rb$ for all $r \neq q$.
	Now it is straightforward from the definitions that $\spl(N(\mathcal{D},E'),q,a) =
	N(\mathcal{D},E'')$. By repeating this process by removing all elements from $E'$ one by one, 
	after applying a number of $\spl$ operations on $\mathcal{N} = N(\mathcal{D},E')$ we obtain
	$N(\mathcal{D},\emptyset) = \mathcal{D}$, hence  $\spl(\mathcal{N}) = \mathcal{D}$.
	
	For the `only if'-part we have to show that no other pre-basic CNFA $\mathcal{N}$ satisfies
	$\spl(\mathcal{N}) = \mathcal{D}$.
	So let $\mathcal{N} = (Q,\Sigma')$ be an arbitrary CNFA satisfying $\spl(\mathcal{N}) = \mathcal{D}  = (Q,\Sigma)$.
	We will prove by induction on the length of the shortest $\spl(\mathcal{N},-,-)$-path from 
	$\mathcal{N} = (Q,\Sigma')$ to $\mathcal{D}$ that $\mathcal{N} = N(\mathcal{D},E')$ for some 
	$E' \subseteq E$. Let the first step be $\spl(\mathcal{N},q,a) = \mathcal{N}'$ in which
	$qa$  consists of at least two states for $q \in Q, a \in \Sigma'$; by the
	induction hypothesis we assume $\mathcal{N}' = N(\mathcal{D},E'')$ for some 
	$E'' \subseteq E$. 

        First assume that $qa$ consists of at least three states. 
	Then according to Lemma \ref{lemma4} every $b \in T^d(Q)$ satisfying $b \subseteq a$ is in $\Sigma$.
	Among these there are three symbols $b_1,b_2,b_3 \in \Sigma$ such that 
	the states $q b_i$ are all distinct for $i=1,2,3$ and for every $r \neq q$ the 
	states $r b_i$ are all equal for $i=1,2,3$.  
	But then $b_1,b_2,b_3$ form a 3-cycle in $G(\mathcal{D})$, contradicting the assumption of
	the theorem. 

Hence $qa$ consists of exactly two states $qa_1,qa_2$, where $a_1,a_2$ are symbols in
$\spl(\mathcal{N},q,a) = \mathcal{N}'$ for which $qa_1,qa_2$ are single states and $ra_1 = ra_2 = ra$ for
all other states $r$. Now we claim that $ra$ consists of exactly one state 
for every $r \neq q$. If not, then choose $r \in Q$ for which $r_1, r_2 \in ra$, $r_1 \neq r_2$. For all
other states $s$ choose $s' \in sa$. For $i,j = 1,2$ define $q a_{i,j} = a_i$, $r a_{i,j} = r_j$ and
$s a_{i,j} = s'$ for all other states $s$. Then for $i,j = 1,2$ we obtain $a_{i,j} \subset a_i$, so 
$a_{i,j} \in \Sigma$ by Lemma \ref{lemma4}. But this yields a 4-cycle $a_{1,1}, a_{1,2}, a_{2,2}, a_{2,1}, a_{1,1}$ 
in $G(\mathcal{D})$, contradicting the assumption of the theorem. Hence indeed $ra$ consists of exactly one state
for every $r \neq q$. Hence $a_1,a_2 \in  T^d(Q)$, and $a = a_1 \cup a_2$, and $\{a_1,a_2\} \in E$.
Since $\spl(\mathcal{N},q,a) = \mathcal{N}' = N(\mathcal{D},E'')$, the non-deterministic symbols of both 
$\mathcal{N}$ and $N(\mathcal{D},E'' \cup \{\{a_1,a_2\}\})$ are
exactly $a = a_1\cup a_2$ and the non-deterministic symbols of $\mathcal{N}'$. Since $\spl(\mathcal{N}) =
\mathcal{D}  = (Q,\Sigma)$ and $\mathcal{N}$ is pre-basic, the deterministic symbols of $\mathcal{N}$ 
are exactly the symbols from $\Sigma$ that are not covered by $E'' \cup \{\{a_1,a_2\}\}$. As the same holds
for $N(\mathcal{D},E'' \cup \{\{a_1,a_2\}\})$, we conclude that all symbols of $\mathcal{N}$ and
$N(\mathcal{D},E'' \cup \{\{a_1,a_2\}\})$ coincide. Hence $\mathcal{N} = N(\mathcal{D},E'' \cup
\{\{a_1,a_2\}\})$, concluding the proof.
\end{proof} 

The proof demonstrates that the requirement concerning cycles is only needed for one of the implications in Theorem \ref{thmsplitinv}. If $G(\mathcal{D})$ does contain a 3- or 4-cycle, it is still possible to detect CNFAs that are mapped to $\mathcal{D}$ by $\spl$. As before, every set of edges in $G(\mathcal{D})$ corresponds to such a CNFA. However, there might exist other CNFAs which are mapped to $\mathcal{D}$ as well. 

The following examples show that for the other implication in Theorem \ref{thmsplitinv} it is essential to disallow cycles of length 
both 3 and 4 in $G(\mathcal{D})$. Let $\mathcal{N}$ be defined by $Q = \{1,2,3\}$, $\Sigma = \{a\}$, 
$1a = \{1,2,3\}$, $2a = \{2\}$, $3a = \{3\}$. Then in $\mathcal{D} = \spl(\mathcal{N}) = \spl(\mathcal{N},1,a)$
we have three symbols that form a 3-cycle in $G(\mathcal{D})$, and  $\mathcal{N}$ is not of the shape
$N(\mathcal{D},E')$ for some set $E'$ of edges of $G(\mathcal{D})$.

As a next example let $\mathcal{N}$ be defined by $Q = \{1,2\}$, $\Sigma = \{a\}$, 
$1a = 2a = \{1,2\}$. Then in $\mathcal{D} = \spl(\mathcal{N})$
we have four symbols that form a 4-cycle in $G(\mathcal{D})$, and  $\mathcal{N}$ is not of the shape
$N(\mathcal{D},E')$ for some set $E'$ of edges of $G(\mathcal{D})$.

The role of being pre-basic is illustrated in the following example. Let $\mathcal{N}$ be defined by 
$Q = \{1,2\}$, $\Sigma = \{a,b\}$, 
$1a = \{1,2\}, 2a = 1b = 2b = \{2\}$. Then  $\mathcal{N}$ is not pre-basic since $b \subseteq a$. In 
$\mathcal{D} = \spl(\mathcal{N},1,a)$ the symbol $a$ is split into two symbols $a_1,a_2$, in which $a_2 = b$.
So by $\spl$ the symbol $b$ disappears, and in $N(\mathcal{D},\{\{a_1,a_2\}\})$ there is only one symbol $a$.

Note that if $\mathcal{D}$ is basic, then $N(\mathcal{D},E')$ is basic
too. However, the converse does not hold. For instance, if $\Sigma = \{a\}$, $Q = \{1,2\}$, 
$1a = \{1,2\}$, $2a = \{2\}$, then the CNFA $\mathcal{N}  = (Q,\Sigma)$ is basic, but 
$\spl(\mathcal{N})$ is not since one of the symbols acts as the identity.

As in \cite{DZ17} basic critical DFAs were investigated, we want to combine this by 
Theorem \ref{thmsplitinv} to investigate basic critical CNFAs. For doing so, we need the following  
lemma. For any CNFA $\mathcal{N} = (Q,\Sigma)$ we write $\mathcal{N}^+$ for $(Q,\Sigma \cup \{\id\})$
for $\id$ being the identity function on $Q$.

\begin{lemma}
\label{lempreb}
A basic CNFA $\mathcal{N}$ is critical if and only if $\spl(\mathcal{N}^+) = \mathcal{D}^+$ for some 
basic critical DFA $\mathcal{D}$.
\end{lemma}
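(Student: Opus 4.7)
The plan is to reduce the lemma to a single commutation identity $\spl(\mathcal{N}^+) = \spl(\mathcal{N})^+$, after which criticality transfers between $\mathcal{N}$ and its associated DFA through Corollary~\ref{cor1}. All subsequent work is bookkeeping around the identity symbol.

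First I would establish the commutation using Lemma~\ref{lemma4}. By that lemma, the alphabet of $\spl(\mathcal{N}^+)$ is $\{b \in T^d(Q) : \exists\, a \in \Sigma \cup \{\id\},\ b \subseteq a\}$. A deterministic $b$ with $b \subseteq \id$ satisfies $qb \subseteq \{q\}$ and $|qb|=1$, so $b = \id$. Hence the alphabet splits as $\{b \in T^d(Q) : \exists\, a \in \Sigma,\ b \subseteq a\} \cup \{\id\}$, which by Lemma~\ref{lemma4} applied to $\mathcal{N}$ is exactly the alphabet of $\spl(\mathcal{N})^+$.

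For the ``only if'' direction, given $\mathcal{N}$ basic and critical on $n$ states, let $\mathcal{D}$ be the DFA obtained from $\spl(\mathcal{N})$ by removing $\id$ if present. Distinct deterministic symbols cannot satisfy strict inclusion, and $\id \notin \mathcal{D}$ by construction, so $\mathcal{D}$ is basic. Corollary~\ref{cor1} gives $d(\spl(\mathcal{N})) = d_3(\mathcal{N}) = (n-1)^2$, and since the identity never shortens a synchronizing word, $d(\mathcal{D}) = (n-1)^2$ as well; thus $\mathcal{D}$ is basic critical. The commutation then yields $\mathcal{D}^+ = \spl(\mathcal{N})^+ = \spl(\mathcal{N}^+)$. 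Conversely, if $\spl(\mathcal{N}^+) = \mathcal{D}^+$ for some basic critical $\mathcal{D}$, the commutation gives $\spl(\mathcal{N})^+ = \mathcal{D}^+$, so $\spl(\mathcal{N})$ and $\mathcal{D}$ agree up to the identity and share the synchronizing length $(n-1)^2$; Corollary~\ref{cor1} forces $d_3(\mathcal{N}) = (n-1)^2$, making $\mathcal{N}$ critical.

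The only real subtlety is this identity-symbol bookkeeping: the ``basic'' condition forbids $\id$, yet $\spl$ applied to a basic CNFA may well introduce $\id$ as a new deterministic symbol (it arises from any $a \in \Sigma$ with $q \in qa$ for all $q$). The $(-)^+$ construction on both sides cleanly absorbs this mismatch, and this is presumably why the lemma is phrased via $\mathcal{N}^+$ and $\mathcal{D}^+$ rather than $\mathcal{N}$ and $\mathcal{D}$ themselves.
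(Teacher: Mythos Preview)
Your proof is correct and essentially follows the paper's approach, with one organizational difference worth noting. The paper does not isolate the commutation identity $\spl(\mathcal{N}^+) = \spl(\mathcal{N})^+$; instead it applies Corollary~\ref{cor1} directly to $\mathcal{N}^+$ (using that adding $\id$ does not affect synchronization), and then observes via Lemma~\ref{lemma4} only the weaker fact that $\id \in \spl(\mathcal{N}^+)$, which already suffices to write $\spl(\mathcal{N}^+) = \mathcal{D}^+$ for a basic $\mathcal{D}$. Your explicit commutation identity is slightly more than the paper proves, but it is a clean way to package the same Lemma~\ref{lemma4} computation and makes the identity-symbol bookkeeping you mention in your final paragraph completely transparent; the paper's shorter route trades that transparency for brevity.
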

\begin{proof}
By Corollary  \ref{cor1} and the fact that adding $\id$ does not influence synchronization, the CNFA 
$\mathcal{N}$ is critical if and only if $\spl(\mathcal{N}^+)$ is critical. 
The 'if'-part follows since $\mathcal{D}^+$ is critical if  $\mathcal{D}$ is critical.
For the 'only if'-part let $\mathcal{N}$ be a basic critical CNFA. Then $\spl(\mathcal{N}^+)$ is 
a critical DFA. Since $\id$ is contained in $\mathcal{N}^+$, by Lemma \ref{lemma4} we obtain that $\id$ 
is also contained in 
$\spl(\mathcal{N}^+)$. Hence $\spl(\mathcal{N}^+) = \mathcal{D}^+$ for some basic DFA $\mathcal{D}$, which is
critical since $\spl(\mathcal{N}^+) = \mathcal{D}^+$ is critical.
\end{proof}

So basic critical CNFAs can be obtained by taking a basic critical DFA
$\mathcal{D}$ and compute $G(\mathcal{D}^+)$. If it does not contain 3- or 4-cycles,
all pre-basic CNFAs $\mathcal{N}$ with $\spl(\mathcal{N}) = \mathcal{D}^+$ can be
obtained by Theorem \ref{thmsplitinv}. Finally, by Lemma \ref{lempreb} the resulting 
basic critical CNFAs are obtained by removing $\id$ wherever it occurs.
As long as no  3- or 4-cycles occur in $G(\mathcal{D}^+)$, all basic critical CNFAs
can be obtained in this way. The number of resulting basic critical CNFAs is equal to
the number of pre-basic critical CNFAs obtained by Theorem \ref{thmsplitinv} since 
by removing the possible occurrence of $\id$ any pre-basic CNFA is transformed to a basic CNFA, and no two
map to the same basic CNFA since for a basic CNFA $\mathcal{N}$ with $\spl(\mathcal{N}) = \mathcal{D}^+$, 
the CNFA $\mathcal{N}^+$ is not pre-basic since it has a symbol in which $\id$ is contained.

For instance, in the cyclic graph $G(A_3^+)$ we have $f = \id$, and there are exactly 3 edges, yielding 
exactly 8 sets of edges. Hence there are exactly 8 basic CNFAs $\mathcal{N}$ with $\spl(\mathcal{N}^+) =
A_3^+$, all obtained from $N(A_3^+,E')$ for a set $E'$ of edges. If $\{b,f\} \in E'$ then this CNFA is already
basic since the identity $f = \id$ is joined with $b$, and otherwise the symbol $f = \id$ is removed.

For $n \leq 6$ all critical DFAs are known; next for all the $n \leq 6$ we apply the above approach to find  and
count all basic critical CNFAs.

\subsection{Analyzing 2 states}

In \cite{DZ17} no analysis of automata with two states was made, since that is a quite
degenerate case in which the maximal shortest synchronizing word is only one single symbol. On two
states $1,2$ there are three possible deterministic non-identity symbols: $a$ mapping both states to 1,  
$b$ mapping both states to 2, and $s$ swapping 1 and 2. A DFA is critical if and only if at least one of the symbols $a$ and $b$ occurs. These yield exactly 6 basic critical
DFAs, having alphabets $\{a\}, \{b\}, \{a,b\},\{a,s\}, \{b,s\}, \{a,b,s\}$, among which $\{a\}$ is isomorphic
to $\{b\}$ and $\{a,s\}$ is isomorphic to $\{b,s\}$, so up to isomorphism there are exactly 4. To each of these DFAs, we add $f = \id$ and apply the above approach.
Computing the graph and counting the edges gives 
the following numbers of basic critical CNFAs corresponding to sets of edges 
\[ \begin{array}{|c|c||c|c|}
\hline
\mbox{symbols DFA} & \mbox{nr of CNFAs} & \mbox{symbols DFA} & \mbox{nr of CNFAs} \\
\hline
a,b,s & 2^4 = 16 & a,b & 2^2 = 4 \\
a,s & 2^2 = 4 & a & 2^1 = 2 \\
b,s & 2^2 = 4 & b & 2^1 = 2 \\
\hline 
\end{array} \]
so yielding 32 CNFAs corresponding to sets of edges. The DFA $\mathcal{D}$ with all three symbols $a,b,s$ together with $\id$ yields a graph that is a 4-cycle:
the edges are $(a,s)$, $(b,s)$, $(a,\id)$, $(b,\id)$. Theorem \ref{thmsplitinv} does not apply to $\mathcal{D}$ due to this 4-cycle, which means that there might be more CNFAs that are mapped to $\mathcal{D}$ by $\spl$. It turns out that apart from the 16 in the table there is one more CNFA $\mathcal{N}$ for which $\spl(\mathcal{N}) = \mathcal{D}$: the CNFA on $\{1,2\}$ with one symbol $c$ satisfying $1c = 2c =
\{1,2\}$. This makes 33 basic critical CNFAs on two states in total; up to isomorphism the
number is 20.

\subsection{Analyzing 3 states}
From \cite{DZ17} we recall that there are exactly 15 basic critical DFAs on 3 states, namely 
the restrictions of $A_3$ to the 15 sets of symbols indicated in the table below.
For each of them we add $f = \id$ and proceed as before. Since $G(A_3^+)$ does not
contain 3- or 4-cycles, also its subgraphs don't. Therefore Theorem \ref{thmsplitinv} applies and by the above
approach we obtain the following numbers $2^k$ of basic critical CNFAs in which $k$ is the number of edges in the
corresponding graph.
\[ \begin{array}{|c|c||c|c|}
\hline
\mbox{symbols DFA} & \mbox{nr of CNFAs} & \mbox{symbols DFA} & \mbox{nr of CNFAs} \\
\hline
a,b,c,d,e & 2^3 = 8 & a,b,e & 2^1 = 2 \\
a,b,c,d & 2^2 = 4 & a,c,d & 2^0 = 1 \\
a,b,c,e & 2^2 = 4 & a,d,e & 2^1 = 2 \\
a,b,d,e & 2^2 = 4 & b,c,e & 2^2 = 4 \\
a,c,d,e & 2^1 = 2 & c,d,e & 2^1 = 2 \\
b,c,d,e & 2^3 = 8 & a,b & 2^1 = 2 \\
a,b,c & 2^2 = 4 & a,d & 2^0 = 1 \\
a,b,d & 2^1 = 2 &  & \\
\hline 
\end{array} \]

So we conclude that there are exactly 50 basic critical CNFAs on three states, including the 15 basic critical
DFAs, corresponding to the empty set of edges. No two of them are isomorphic.

\subsection{Analyzing 4 states}
From \cite{DZ17} we recall that there are exactly 12 basic critical DFAs on 4 states, namely $C_4$ and
T4-2 depicted below and the restrictions of $A_4$ to the 10 sets of symbols
$\{a,b,c,d,e\}$, $\{a,b,c,d\}$, $\{a,b,c,e\}$, $\{a,b,d,e\}$, $\{b,c,d,e\}$, $\{a,b,c\}$, $\{a,b,d\}$,
$\{a,b,e\}$, $\{b,d,e\}$ and $\{a,b\}$. 
Here $A_4$ is the restriction of  $A_4^+$ to $abcde$ depicted
below. In fact, $A_4^+$ was obtained from A4 by adding an extra symbol $f = \id$.

\vspace{2mm}

\includegraphics[scale=0.22]{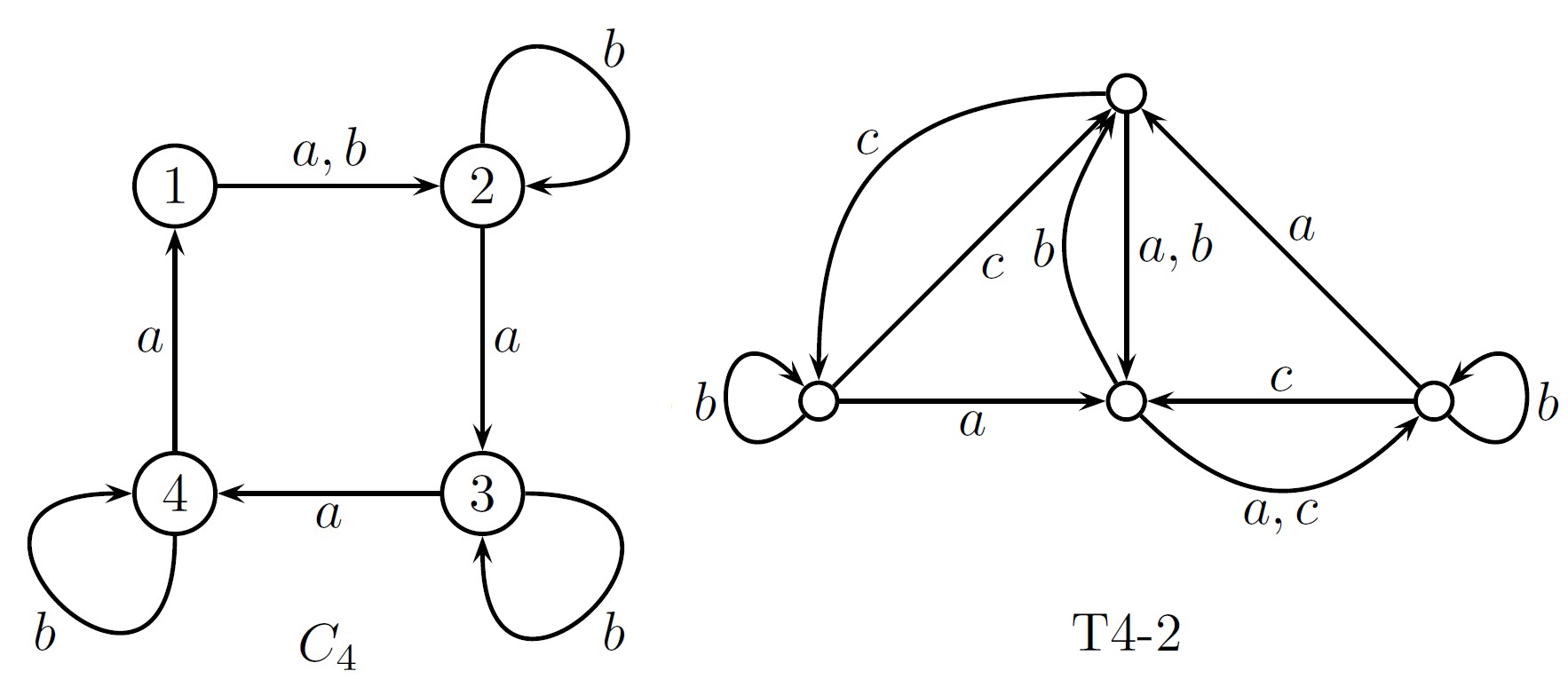}

\vspace{2mm}

\includegraphics[scale=0.3]{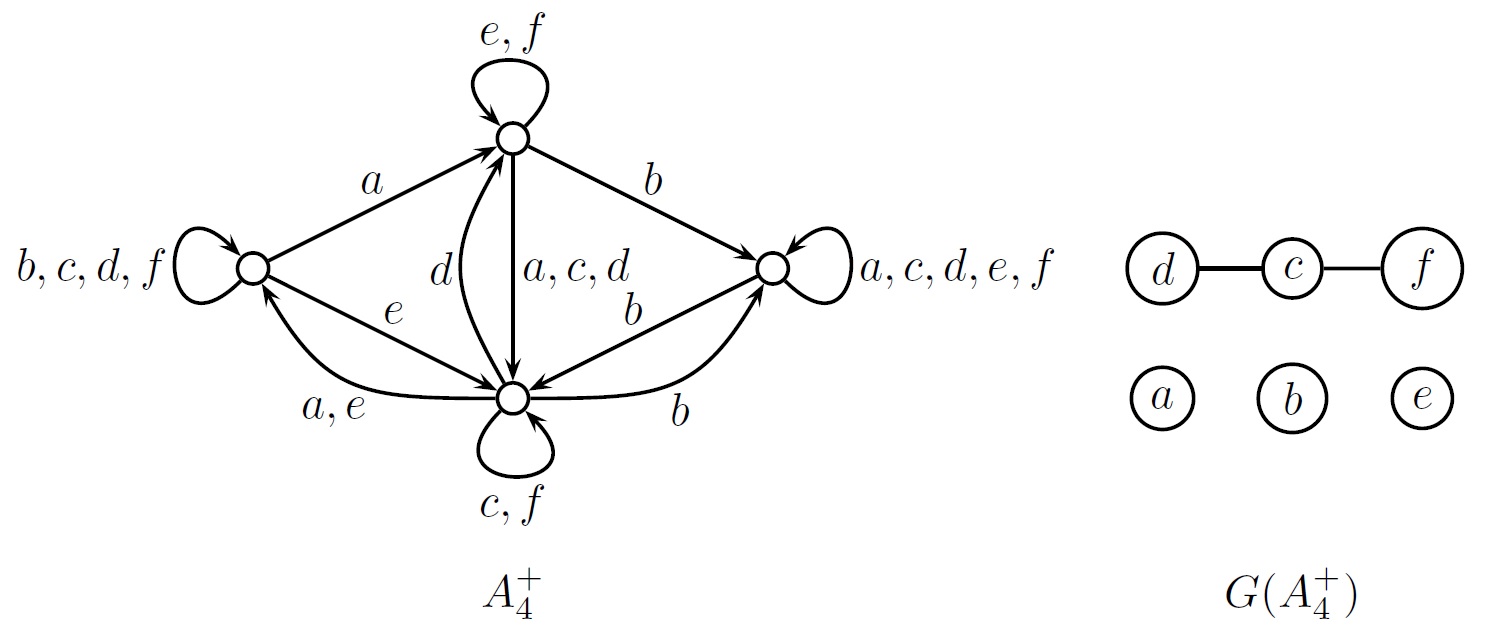}

\vspace{2mm}

It is straightforward that $G(C_4^+)$, is a graph with one edge: $b$ connected to $\id$, hence
yielding exactly two basic CNFAs.  Doing the same for 
$\mbox{T4-2}^+$ yields the empty graph, so there are no more CNFAs for which the
$\spl$ is T4-2 then only T4-2 itself.

For $A_4^+$ the corresponding graph $G(A_4^+)$ is indicated, so on all its subgraphs  Theorem
\ref{thmsplitinv} applies. For the 10 relevant subgraphs we obtain the following numbers of basic critical CNFAs.
\[ \begin{array}{|c|c||c|c|}
\hline
\mbox{symbols DFA} & \mbox{nr of CNFAs} & \mbox{symbols DFA} & \mbox{nr of CNFAs} \\
\hline
a,b,c,d,e & 2^2 = 4 & a,b,c & 2^1 = 2  \\
a,b,c,d & 2^2 = 4  & a,b,d &  2^0 = 1 \\
a,b,c,e & 2^1 = 2 & a,b,e &  2^0 = 1 \\
a,b,d,e & 2^0 = 1 & b,d,e &  2^0 = 1 \\
b,c,d,e & 2^2 = 4  & a,b &  2^0 = 1 \\
\hline 
\end{array} \]

So we conclude that there are exactly 21 basic critical CNFAs $\mathcal{N}$ for which the $\spl(\mathcal{N}^+)$ is 
$A_4^+$. Together with
the two related to $C_4$ and the single one T4-2 this yields exactly 24 basic critical CNFAs on four states (none of them isomorphic),
including the 12 basic critical DFAs.

\subsection{Analyzing $\geq 5$ states}

On $\geq 5$ states the only known basic critical DFAs are $C_n$ for every $n \geq 5$, and two
more: one from Roman on 5 states and 3 symbols, and 
one from Kari on 6 states and 2 symbols, for their definitions and references we refer to \cite{DZ17}.
It was shown by Trahtman \cite{T06} that this investigation is complete for $n \leq 10$ when
restricting to at most two symbols; recently in \cite{BDZ17} it was shown that for 
$n \leq 6$ indeed no more critical DFAs exists, with no restrictions on the numbers of symbols. 

For both the Kari and Roman DFA the corresponding graph is empty, yielding no other basic
critical CNFAs. The DFA $C_n$ is defined to be \linebreak $(\{1,2,\ldots,n\},\{a,b\})$ for $a,b$ defined by
$qa = q+1$ for $q < n$, $na = 1$, $1b = 2$, $qb = q$ for $q > 1$. 
After adding the identity $\id$ its graph consists of a single edge $\{b,\id\}$, yielding one more basic critical 
CNFA on $n$ states: $C_n$ to which a $b$-self-loop is added to 1, yielding $1b = \{1,2\}$.
Summarizing, for $n \leq 6$ we have up to isomorphism the following numbers of basic critical DFAs and CNFAs:
\[ \begin{array}{|c|c|c|}
\hline
\mbox{nr of states} & \mbox{nr of DFAs} & \mbox{nr of CNFAs} \\
\hline
2 & 4 & 20 \\
3 & 15 & 50 \\
4 & 12 & 24 \\
5 & 2 & 3 \\
6 & 2 & 3 \\
\hline 
\end{array} \]
while for $n > 6$ the only known basic critical DFA is $C_n$, to be extended to exactly one more basic critical CNFA.

\section{Conclusions}

The central result of this paper is that every D3-directing CNFA can be transformed to a synchronizing DFA with the same synchronizing word length. In this paper we present this $\spl$ transformation and explore its properties. An immediate consequence is that the maximal shortest D3-directing length for CNFAs is equal to the maximal shortest synchronizing length for DFAs, which means that the famous \v{C}ern\'y conjecture for DFAs extends to CNFAs. 

For several classes of DFAs with some additional properties, tighter bounds for synchronization lengths have been established. If a CNFA is transformed into a DFA belonging to such a class, then the tighter bound also applies to the CNFA. This observation is used to define properties for CNFAs that guarantee improvements over the general cubic bound.  

In the last part of the paper, we investigate critical CNFAs. The tight connection between critical DFAs and critical CNFAs, combined with the fact that critical DFAs are extremely rare, implies that also a very small fraction of CNFAs is critical. All critical DFAs on at most 6 states are known. By essentially inverting $\spl$, we identify all critical CNFAs on at most 6 states. Furthermore, for all $n\geq 3$ there is exactly one critical CNFA which is a strict extension of \v{C}ern\'y's DFA $C_n$.

\bibliographystyle{plain}
\bibliography{ref}

\end{document}